\newcommand{\Ni}{\mathbb{N}}
\newcommand{\KK}{\mathbb{K}}
\newcommand{\CC}{\mathbb{C}}
\newcommand{\NN}{\mathbb{N}}
\newcommand{\RR}{\mathbb{R}}
\newcommand{\QQ}{\mathbb{Q}}
\newcommand{\UU}{\mathbb{U}}
\newcommand{\ZZ}{\mathbb{Z}}
\newcommand{\notequiv}{\equiv \kern-0.95em / \mbox{ }}
\newtheorem{thm}{Theorem}[section]
\newtheorem{cor}[thm]{Corollary}
\newtheorem{lem}[thm]{Lemma}
\newtheorem{prop}[thm]{Proposition}
\newtheorem{rek}[thm]{Remark}
\newtheorem{defin}[thm]{Definition}
\newenvironment{deft}{\begin{defin} \rm}{\end{defin}}
\title{First order theory of cyclically ordered groups.}
\author{M. Giraudet, G. Leloup and  F. Lucas}
\date{November 3, 2013}
\begin{document}
\begin{abstract}
By a result known as Rieger's theorem (1956), there is a one-to-one correspondence, assigning 
to each cyclically ordered group $H$ a pair $(G,z)$ where $G$ is a totally ordered group and $z$ 
is an element in the center of $G$, generating a cofinal subgroup $\langle z\rangle$ of $G$, and such that the 
quotient group $G/\langle z\rangle$ is isomorphic to $H$. \\
\indent  We first establish that, in this correspondence, the first order theory of the cyclically 
ordered group $H$ is uniquely determined by the first order theory of the pair $(G,z)$.\\
\indent 	
Then we prove that the class of cyclically ordered groups is an elementary class and give an 
axiom system for it. \\
\indent	
Finally we show that, in opposition to the fact that all theories of totally Abelian 
ordered groups have 
the same universal part, there are uncountably many universal theories of Abelian 
cyclically ordered 
groups. We give for each of these universal theories an invariant, which is a pair of 
subgroups of the group of unimodular complex numbers.
\end{abstract}

\maketitle
\footnote{2010 {\it Mathematics Subject Classification}. 03C64, 06F15, 06F99.}
Keywords: cyclically ordered groups, first order theory, orderable, universal theory. \\ 
\section{Introduction and basic facts.}\label{In}
\indent 
The study of cyclically ordered groups (c.o.g.) was initiated in \cite{R}.
Definitions and notations not given here, about c.o.g. and totally ordered groups (t.o.g.) can be 
found in \cite{F} (IV, 6, pp. 61-65), \cite{JP}, \cite{ZP} and \cite{Sw}. The terminology about 
model theory can be found in \cite{CK}. \\
\noindent We say that $(A,R)$ is a {\it cyclically ordered set} (or $R$ {\it is a cyclic order} on 
$A$) if $A$ is a set and $R$ is a ternary relation on $A$ satisfying the following axioms $R_1$ 
to $R_4$: \\
\indent 
$R_1:\; \forall x,y,z$ $ (R(x,y,z) \Rightarrow x \neq y \neq z \neq x) $, ($R$ is strict);\\
\indent 
$R_2: \; \forall x,y,z$ $ (x \neq y \neq z \neq x \Rightarrow (R(x,y,z) \mbox{ or } R(x,z,y)))$,  
($R$ is total);\\
\indent 
$R_3: \; \forall x,y,z$ $(R(x,y,z) \Rightarrow R(y,z,x))$,  ($R$ is cyclic);\\
\indent 
$R_4: \; \forall x,y,z,u$ $(R(x,y,z) \mbox{ and } R(y,u,z)\Rightarrow R(x,u,z))$,  
($R$ is transitive ).\\
\noindent We say that $(G,R)$ is a {\it cyclically ordered group (c.o.g.)} if $R$ is a cyclic order 
on the underlying set of $G$
which is compatible with the group law of $G$, i. e. satisfies:\\
\indent 
$R_5: \; \forall x,y,z,u,v$ $(R(x,y,z) \Rightarrow R(uxv,uyv,uzv))$.\\
\noindent It is easy to check that in a c.o.g.  with unit $e$, $R(e,x,y)$ implies $R(e,y^{-1},x^{-1})$. (Remark that $R$ is determined by its projection: 
$\{ (x,y);\; R(e,x,y) \}$). 
We shall often let $R(x,y,z,t,\dots)$ stand for $R(x,y,z)$ and $R(x,z,t)$ and $\dots$. \\
\noindent The language of c.o.g. will be here $L_c=\{\cdot,R,e,^{-1}\}$, where the first predicate 
stands for the group law, $R$ for the ternary relation, $e$ for the group identity and $^{-1}$ 
for the inverse function. (When considering Abelian c.o.g. we shall also use the usual symbols 
$+,0,-$). Remark that the theory of cyclically ordered groups has a finite set of universal 
axioms in $L_c$.\\
\noindent If $G$ is a c.o.g., $H$ is a normal subgroup of $G$, and $x\in G$, we shall let 
$\overline{x}$ 
stand for $xH$ whenever it yields no ambiguity.\\
\indent 
\noindent The {\it positive cone} of $(G,R)$ is the set $P(G)=P=\{x;\;
 R(e,x,x^2)\} 
\cup \{ e \}$. \cite{ZP}.\\
\indent 
\noindent Clearly $P\cap P^{-1}=\{e\}$ and $G=P\cup P^{-1}\cup \{x;\; x^2=e\}$. 
We shall set $\left| x \right|=x$ if $R(x^{-1},e,x)$, 
$\left| x \right|=x^{-1}$ if $R(x,e,x^{-1})$, and $\left| x \right|=x=x^{-1}$ if $x^2=e$. 
Remark that the positive cone $P$ of a c.o.g. does not always satify $P\cdot 
P \subseteq P$ (for 
example the (additive) c.o.g. $ \ZZ / 3\ZZ$ where $R(\overline{0},\overline{1},\overline{2})$ 
and $\ZZ $ is the 
additive group of integers, we have $\overline{1} \in P$ and $ \overline{1} + \overline{1} 
\notin P$).\\
\noindent If $G$ is a group, $Z(G)$ will denote its center. If $z \in G$ (or $X\subseteq G$) we say 
that $z$ (or $X$) is {\it central} if $z$ (or $X$) lies in the center of $G$: $z\in Z(G))$ (or 
$X\subseteq Z(G)$). If $(G,\leq )$ is a t.o.g. we say that $z$ (or $X$) is {\it cofinal in} $G$ if 
the subgroup generated by $z$: $\langle z\rangle$ (or $X$: $\langle X\rangle$) is cofinal in $(G,\leq)$.\\
\indent 
We must now give three fundamental constructions as follows in \ref{subsec11}, \ref{subsec12}, \ref{subsec13} below.
\subsection{Linear cyclically ordered groups.}\label{subsec11}
\indent 
A t.o.g. $(G,\leq)$ is cyclically ordered by the relation given by: $R(x,y,z)$ iff $(x<y<z 
\mbox{ or } y<z<x \mbox{ or } z<x<y)$. In this case we say that $(G,R)$ is the {\it cyclically 
ordered group associated to} $(G,\leq)$ and that $(G,R)$ is a {\it linear c.o.g.}. 
(Obviously a c.o.g. $(G,R)$ is linear if and only if $P\cdot P\subset P$). 
We have $e\leq x$ iff $R(e,x,x^2)$ iff $\left| x 
\right|=x$ (in this case $\left| x \right|$ has the same meaning in the linear c.o.g. it usually has 
in the t.o.g.).\\
\indent 
J. Jakub\'{\i}k and C. Pringerov\'a proved (\cite{JP} Lemma 3) that a c.o.g. $(G,R)$ is a linear c.o.g. 
iff it satisfies the following system of axioms: 
$\{\alpha\}\cup \{\beta _n \}_{n\in \Ni;\; n>1}$\\
\indent 
$\alpha$:  $\forall  x\neq e, \; x^2\neq e$\\
\indent 
$\beta _n$:  $\forall  x\neq e \; R(e,x,x^2)\Rightarrow R(e,x,x^n)$. 
\subsection{The winding construction.}\label{subsec12}(\cite{F})\\
\indent 
If $(G,\leq)$ is a t.o.g. and $z\in G$, $z>e$ is a central and cofinal element of $G$ then the quotient group $G/\langle z\rangle$ can be cyclically ordered by:\\
\indent 
$R(\overline{g},\overline{h},\overline{k})$ iff there are $g',h',k'$ such that\\
\indent 
$\overline{g}=\overline{g'}$, $\overline{h}=\overline{h'}$, 
$\overline{k}=\overline{k'}$ and $(e\leq g'<h'<k'<z \mbox{ or } e\leq h'<k'<g'<z \mbox{ or } 
e\leq k'<g'<h'<z)$.\\
\noindent $(G/\langle z\rangle,R)$ is the {\it wound-round c.o.g. associated to} $(G,\leq )$ and $z$.
This construction will be generalized later. The special cases below are of current use.
\subsubsection{Unimodular complex numbers.}
Let $\CC$ denote the field of complex numbers and $K=\{ x\in \CC;\; \left| x \right|=1\}=
\{ e^{i\theta};\; 0\leq \theta <2\pi \}$, equipped with the usual multiplication. For each 
$x \in \KK$ we  let $\theta (x)$ be the unique $\theta$ such that $x=e^{i \theta }$ and 
$0\leq \theta <2\pi$ and let $\KK$ be cyclically ordered by the relation:\\
\indent 
$R(e^{i\theta},e^{i\theta'},e^{i\theta''})$ iff ($\theta <\theta' < \theta''\mbox{ or }\theta' 
<\theta'' < \theta \mbox{ or }\theta ''<\theta < \theta'$) i.e.\\
\indent 
$R(x,y,z)$ iff ($\theta(x)< \theta(y) <\theta(z)$ or $\theta(y)< \theta(z) <\theta(x)$ or 
$\theta(z)< \theta(x) <\theta(y)$).\\
\noindent The c.o.g  $\KK$ is the wound-round c.o.g. associated to the additive ordered group of 
real numbers $(\RR,\leq )$ and $1$. It is of crucial importance.\\
\indent 
Let $\UU$ denote the torsion part of $\KK$ (the group of roots of $1$ in $\CC$). It is the 
wound-round associated to the additive ordered group $\QQ$ of rational numbers and $1$.
\subsubsection{Finite cyclic groups.}
\indent 
For each $n$ the finite cyclic group $\ZZ/n \ZZ$ is cyclically ordered by 
$R(\overline{\imath},\overline{\jmath},\overline{k})$ iff \\
\indent 
$(\exists i',j',k' \in \{0,1,\dots ,n-1\}, \overline{\imath}=\overline{\imath'}, 
\overline{\jmath}=\overline{\jmath'}, 
\overline{k}=\overline{k'} \mbox{ and } (i<j<k \mbox{ or } j<k<i \mbox{ or } k<i<j))$.\\
\noindent It is the wound-round c.o.g. associated to $(\ZZ, \leq )$ and $n$. 
Clearly $\UU$ embeds a copy of each such $\ZZ/n \ZZ$.\\
\indent 
In particular $\ZZ/2 \ZZ$  is cyclically ordered by the empty relation. A cyclically ordered group 
cannot have more than one element of order $2$. (Suppose $x\neq e, y\neq e, x\neq y, 
x^2=y^2=e$ and $R(e,x,y)$. Then $R(x,e,xy)$ and $R(y,xy,e)$ so $R(x,e,y)$, a contradiction). 
Hence if $(G,R)$ has an element of order $2$, then the c.o.g. $\ZZ/2 \ZZ$ is canonically embedded 
in $(G,R)$.
\subsection{Lexicographic product.}\label{subsec13}(\cite{Sw}, \cite{JP})\\
\indent 
If $(C,R)$ is a c.o.g. and $(L,\leq )$ is a t.o.g., we can define a {\it lexicographic cyclic order} on 
$C\times L$ by $R'((c,r),(c',r'),(c'',r''))$ iff\\
\noindent ($R(c,c',c'')$ or $(c=c' \neq c'' \mbox{ and } r<r')$ or $(c \neq c'=c'' \mbox{ and } 
r'<r'') $ or $(c=c'' \neq c' \mbox{ and } r''<r)$ or $(c=c'=c'' \mbox{ and } (r<r'<r'' \mbox{ or } 
r'<r''<r \mbox{ or } r''<r<r'))$).\\
\noindent We let $C\overrightarrow{\times} L$ denote this c.o.g. and call it the 
{\it lexicographic product} of $(C,r)$ and $(L,\leq )$.
The following classical results on c.o.g. are related to the above constructions.\\
\indent 
Rieger's theorem states that every cyclically ordered group $(G,R)$ can be obtained by the winding construction: there is a canonical t.o.g. $(uw(g), \leq )$ and a central and cofinal 
element $z_{G}$ in it, such that $(G,r) \cong ((uw(G), \leq )/\langle z_{G}\rangle)$.\\
\indent 
Swirczkowski's theorem states that every c.o.g. can be embedded in a lexicographic product 
$\KK\overrightarrow{\times} L$ for some t.o.g. $L$.\\
\indent 
The present model theoretic study of c.o.g. is based on those two theorems. In Section 
\ref{section2} we recall Rieger's theorem and we generalize the crucial construction. Section 
\ref{section3} deals with c-convex subgroups (an analogue of the convex subgroups of t.o.g.'s), 
normal c-convex subgroups, and problems about their characterization as kernels of 
c-homomorphisms, and finally we give a correspondence between proper c-convex subgroups of 
$(G,R)$ and proper convex subgroups of $(uw(G,R), \leq)$ in the nonlinear cases. In Section 
\ref{section4} we give a model theoretic version of Rieger's theorem: we prove (Theorem 
\ref{treq}) that two c.o.g. $(G,R)$ and $(G',R')$ have the same first order theory in $L_c$ iff 
so have $(uw(G),\leq ,z_G)$ and $(uw(G'),\leq ,z_{G'})$, in the language of o.g. with parameter. 
In Section \ref{section5} we recall Swirczkowski's theorem, and Zheleva's result which 
characterizes cyclically orderable groups in terms of their torsion parts. We prove (Theorem 
\ref{kkk}) that a group $G$ is cyclically orderable iff its center $Z(G)$ is cyclically orderable 
and the factor group $G/Z(G)$ is orderable (an analogue of the result of Kokorin and Kopitov 
characterizing totally orderable groups as those for which $Z(G)$ and $G/Z(G)$ are orderable). 
We derive from it (Theorem \ref{thm56}) an axiom system for the class of orderable c.o.g.. 
In Section \ref{section6} we study the universal theory of Abelian c.o.g.'s. Gurevich and 
Kokorin (\cite{GK}, \cite{Gp}) proved that two totally ordered Abelian groups satisfy the same 
universal formulas. For Abelian c.o.g., having or not an element of a given order, 
obviously gives different universal theories. We give (Theorem \ref{thm63}) a full classification 
of universal theories of Abelian c.o.g.'s. 
\section{Rieger's type constructions.}\label{section2}
\begin{thm}\label{th21} (Rieger, \cite{R}, \cite{F}). If (G,R) is a cyclically ordered group, there exists a linearly ordered group $(F,\leq)$ and a positive element $z\in F$ which is central and cofinal such that $(G,R)$ is the c.o.g. $(F,\leq)/\langle z\rangle$.
\end{thm}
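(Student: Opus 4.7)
The plan is to construct $(F,\leq)$ by an explicit "universal cover" of $(G,R)$. As a set, take $F := \ZZ \times G$ and set $z := (1,e)$. The intended group structure is a central extension of $G$ by $\ZZ$, determined by a cocycle $c \colon G \times G \to \{0,1\}$ that records whether a given product wraps past $e$ in the cyclic order; the intended linear order puts the slices $\{n\}\times G$ in increasing order of $n$, with each slice internally ordered by cyclic position starting from $e$.

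First I would extract from the ternary relation $R$ a strict total order $\prec$ on $G \setminus \{e\}$ by declaring $g \prec h \Leftrightarrow R(e,g,h)$; axioms $R_1$--$R_4$ guarantee that $\prec$ is well-defined, total and transitive. Then define $c(g,h) := 1$ precisely when $g, h, gh$ are all distinct from $e$ and $R(e, gh, g)$ holds (so that the path $e \to g \to gh$ has wrapped past $e$), with $c(g,h) := 0$ whenever one of $g, h, gh$ equals $e$. Define the product on $F$ by $(n,g)\cdot(m,h) := (n + m + c(g,h),\, gh)$ and define $(n,g) \leq (m,h)$ iff $n < m$, or $n = m$ and either $g = h$, or $g = e$, or ($g, h \ne e$ and $g \prec h$).

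The crucial verifications are: (i) the $2$-cocycle identity $c(g,h) + c(gh,k) = c(h,k) + c(g,hk)$, which yields associativity of the product and makes $(1,e)$ central; and (ii) compatibility of $\leq$ with the group operation, which boils down to the invariance property $R_5$ of $R$ under the constructed translations. Both reduce to case analyses on the cyclic positions of the elements involved relative to $e$, using $R_1$--$R_5$ repeatedly; the case where $G$ contains an element of order $2$ has to be handled separately, since then $g$ may coincide with $g^{-1}$ and the positive cone $P$ does not cover all of $G$. Cofinality of $\langle z\rangle$ in $F$ is immediate from the definition of $\leq$ (every element lies in some slice $\{n\}\times G$).

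The main obstacle is the cocycle identity: since $c$ is defined by a disjunction of cyclic-order conditions, proving it requires enumerating the relative cyclic positions of $g, h, k, gh, hk, ghk$ with respect to $e$ and invoking $R_4$ to rule out inconsistent configurations. Once associativity and order-compatibility are in place, checking that the winding construction of \ref{subsec12} applied to $(F,\leq,z)$ returns $(G,R)$ is essentially automatic: the canonical representative of a coset $(n,g)\langle z\rangle$ in the fundamental domain $\{0\}\times G$ is $(0,g)$, and the induced cyclic relation on $G$ reproduces $R$ slice by slice.
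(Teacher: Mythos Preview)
Your overall strategy is exactly the one the paper invokes (it defers to the explicit construction $uw(G)=\ZZ\times G$ stated as a separate theorem), and packaging the group law as a central extension with $2$-cocycle $c\colon G\times G\to\{0,1\}$ is the right way to organize the verification. However, the specific cocycle you wrote down is wrong in one case, and this is not a harmless boundary convention: it breaks associativity.

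You set $c(g,h)=0$ whenever one of $g,h,gh$ equals $e$. But when $g\neq e$ and $h=g^{-1}$, the walk $e\to g\to gh=e$ has gone once around the circle, so the wrap count must be $c(g,g^{-1})=1$, not $0$. Concretely, take any $g$ with $R(e,g,g^2)$ (so $e,g,g^2$ are pairwise distinct). With your $c$ one computes
\[
\bigl((0,g)\cdot(0,g)\bigr)\cdot(0,g^{-1})=(0,g^2)\cdot(0,g^{-1})=(1,g),
\]
because $c(g^2,g^{-1})=1$ (indeed $R(e,g^2\cdot g^{-1},g^2)=R(e,g,g^2)$ holds), whereas
\[
(0,g)\cdot\bigl((0,g)\cdot(0,g^{-1})\bigr)=(0,g)\cdot(0,e)=(0,g).
\]
Equivalently, the cocycle identity fails at $(g,g,g^{-1})$: the left side is $c(g,g)+c(g^2,g^{-1})=0+1$, the right side is $c(g,g^{-1})+c(g,e)=0+0$. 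The correction is precisely the paper's rule: declare $c(g,h)=1$ also when $g,h\neq e$ and $gh=e$; then $(0,g)^{-1}=(-1,g^{-1})$ for $g\neq e$ rather than $(0,g^{-1})$, and the cocycle identity and order-compatibility checks you outline go through.
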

The proof relies on the following theorem:
\begin{thm} (\cite{F}). If $(G,R)$
is a cyclically ordered group, then the structure $(uw(G),\cdot,\leq_R)$ is a linearly ordered group.\\
\indent Here: \\
1)	$(uw(G),\cdot)$ is the set $\ZZ\times G$, \\
2)	the order relation $\leq_R$ is defined by: $(m,g)\leq (m',g')$ iff $(m,g)=(m',g')$ or $m<m'$ or $m=m'$ and ($R(e,g,g')$ or $g=e$)),\\ 
3) the group law is given by: $(k,e)\cdot (m,h)=(k+m,h)$, $(k,g)\cdot (m,h)$ is either 
$(k+m,gh)$ if $ R(e,g,gh)$, or $ (k+m+1,gh)$ if $ R(e,gh,g)$, or $ (k+m+1,e) $ if $ gh=e\neq g $.
\end{thm}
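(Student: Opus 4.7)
The plan is to verify separately that (i) $(uw(G),\cdot)$ is a group, (ii) $\leq_R$ is a total order on $uw(G)$, and (iii) the multiplication is compatible with $\leq_R$. The whole argument becomes much cleaner once the piecewise multiplication is recast as $(k,g)\cdot(m,h)=(k+m+\tau(g,h),\,gh)$, where $\tau:G\times G\to\{0,1\}$ is defined by $\tau(g,h)=0$ whenever $g=e$, $h=e$, or $R(e,g,gh)$ holds, and $\tau(g,h)=1$ whenever $R(e,gh,g)$ holds or $gh=e\neq g$. Axioms $R_1$ and $R_2$, applied to the triple $\{e,g,gh\}$, ensure that exactly one case occurs when these elements are pairwise distinct, so $\tau$ is well defined and the formula agrees with the rules stated in the theorem.

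For (i), $(0,e)$ is a two-sided identity and a two-sided inverse of $(k,g)$ is $(-k-\tau(g,g^{-1}),\,g^{-1})$, which reduces to $(-k,e)$ when $g=e$ and to $(-k-1,g^{-1})$ otherwise. Associativity is then equivalent to the $2$-cocycle identity
\[
\tau(g,h)+\tau(gh,k)\;=\;\tau(g,hk)+\tau(h,k)
\]
for all $g,h,k\in G$, and this is the main obstacle. I would establish it by a case analysis on the cyclic positions of $e,g,gh,ghk$ and of $e,h,hk$, using $R_3$ to cyclically permute a relation and $R_4$ to compose relations. The translation invariance $R_5$ is what lets one pass from $R(e,g,gh)$ to $R(g^{-1},e,h)$, and so tie $\tau(g,h)$ to the position of $h$ relative to $e$ and $g^{-1}$; this is the bridge that makes both sides of the cocycle identity refer to comparable data. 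Degenerate subcases (some factor equal to $e$, or some intermediate product collapsing to $e$) must be handled separately but are immediate.

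For (ii), reflexivity is part of the definition. Antisymmetry reduces to the second coordinate: if $m=m'$ and both $R(e,g,g')$ and $R(e,g',g)$ held, applying $R_3$ would produce a triple with a repeated entry, violating $R_1$. Transitivity with equal first coordinates amounts to showing that $R(e,g,g')$ and $R(e,g',g'')$ imply $R(e,g,g'')$, obtained by a short combination of $R_3$ and $R_4$. Totality follows at once from $R_2$.

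Compatibility (iii) amounts to checking that left and right translation by any $(l,h)$ preserve the strict order derived from $\leq_R$. The nontrivial case is $(m,g)<(m,g')$ with $R(e,g,g')$, where $R_5$ yields $R(h,hg,hg')$ and its right-sided analogue; one then checks that the $\tau$-corrections appearing in the first coordinates of $(l,h)(m,g)$ and $(l,h)(m,g')$ line up exactly so that the resulting pairs are again comparable in the same direction. The hardest step throughout is unquestionably the cocycle identity for $\tau$; once it is in hand, every remaining verification is routine bookkeeping driven directly by $R_1$–$R_5$.
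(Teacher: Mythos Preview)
The paper does not prove this theorem: it is quoted from Fuchs \cite{F} and no argument is supplied beyond the statement itself. So there is no ``paper's proof'' to compare against; your proposal stands on its own.

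Your outline is correct and is in fact the clean modern way to organize this verification. Recasting the multiplication as $(k,g)(m,h)=(k+m+\tau(g,h),gh)$ with a $\{0,1\}$-valued $\tau$ and reducing associativity to the $2$-cocycle identity $\tau(g,h)+\tau(gh,k)=\tau(g,hk)+\tau(h,k)$ is exactly right, and you correctly isolate this as the only non-mechanical step. One small remark: the statement in the paper, read literally, omits the case $g\neq e$, $h=e$ (there $gh=g$, so neither $R(e,g,gh)$ nor $R(e,gh,g)$ nor $gh=e$ applies); your definition of $\tau$ silently repairs this by declaring $\tau(g,e)=0$, which is the intended value. In an actual write-up you would want to make that repair explicit and then carry out the cocycle case analysis rather than merely announcing it, but as a plan the proposal is sound.
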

\begin{rek}\label{rk23} One can also easily verify that:
\begin{enumerate}
\item If $g\neq e$ then $(k,g)^{-1}=(-k-1,g^{-1})$.
\item $(k,g)\cdot (k',h)=(k',h)\cdot (k,g)$ iff $gh=hg$.
\item $G$ is Abelian iff $uw(G)$ is Abelian.
\item The element $(1,e)$ which will be denoted by $z_G$, is central and cofinal in $uw(G)$.
\item $(uw(G),\leq_R)/\langle z_G\rangle \cong (G,R)$.
\item If $(F,\leq)$ is a linearly orderd group and $z$ is central and cofinal in $F$ and $R$ is the cyclic order defined on $uw(F/\langle z\rangle)$ in the winding construction, then 
$(uw(F/\langle z\rangle),\leq_R)$ is isomorphic to $(F,\leq)$.
\item If $(H,\leq)$ is a linearly ordered group, and $(G,R)$ is a c.o.g., we have $uw(G\overrightarrow{\times}H)=uw(G)\overrightarrow{\times}H$ where $uw(G\overrightarrow{\times}H)$ 
is the lexicographical product of linearly ordered groups and $G\overrightarrow{\times}H$ is the 
lexicographical product introduced in Section \ref{subsec13}.
\end{enumerate}
\end{rek}
\begin{deft} The linearly ordered group $(uw(G),\leq_R)$ will be named the {\it Rieger unwound} of $G$.
\end{deft}
\begin{lem}[winding, unwinding and substructures.]
\begin{enumerate}
\item If $(G,R)$ and $(G',R')$ are c.o.g., and $(G,R)$ is a substructure of $(G',R')$ then the unwound \\
$(uw(G,R),\leq_R)$ can be embedded in $(uw(G,R),\leq_R)$ with $z_G=z_{G'}$.
\item If $(L,\leq)$ and $(L',\leq)$ are linearly ordered groups and $(L,\leq)$ is a substructure of $(L', \leq)$ and for some $z\in L$ which is central and cofinal in $(L',\leq)$, $(L,\leq,z)$ is a substructure of $(L',\leq,z)$ then  z is also central and cofinal in $(L,\leq)$ and 
$(L/\langle z\rangle,R)$ is a substructure of $(L'/\langle z\rangle,R)$.
\end{enumerate}
\end{lem}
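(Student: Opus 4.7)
The plan is to treat the two parts independently, since each reduces to unwinding the explicit definitions from the preceding theorem and Remark~\ref{rk23}.

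For part (1), I would define the candidate embedding $\iota \colon uw(G) \to uw(G')$ by the set-theoretic inclusion $\iota(m,g)=(m,g)$, which clearly sends $z_G=(1,e)$ to $z_{G'}=(1,e)$. That $\iota$ is injective is immediate; what must be verified is that it preserves the group law and the order. Both verifications amount to a case distinction based on the ternary relation. For the product $(k,g)\cdot(m,h)$, the three cases listed in the theorem ($R(e,g,gh)$, $R(e,gh,g)$, or $gh=e\ne g$) are exhaustive in $G$ and in $G'$, and because $(G,R)$ is a substructure of $(G',R')$ the predicate $R$ evaluated on elements of $G$ agrees with $R'$, so the same case applies in both unwound groups and yields the same product. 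The same remark handles $\leq_R$, whose definition appeals only to $R(e,g,g')$ and to equality.

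For part (2), first observe that centrality and cofinality of $z$ transfer from $L'$ to $L$ for trivial reasons: centrality is a universal statement in the language with the constant $z$, and if $x\in L$ then by cofinality in $L'$ there is $n\in\NN$ with $|x|\le z^n$, and $z^n$ already lies in $L$ because $z\in L$. Next, I would define $\bar\iota \colon L/\langle z\rangle\to L'/\langle z\rangle$ on cosets by $\bar\iota(x\langle z\rangle)=x\langle z\rangle$. This is well-defined and injective because $\langle z\rangle\subseteq L$, so two elements of $L$ are congruent modulo $\langle z\rangle$ in $L'$ iff they are congruent in $L$; it is obviously a group homomorphism.

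It remains to show that $\bar\iota$ preserves the cyclic order, i.e.\ that the cyclic order on $L/\langle z\rangle$ produced by the winding construction coincides with the restriction of that on $L'/\langle z\rangle$. The key observation is that cofinality of $\langle z\rangle$ and the fact that $z>e$ guarantee, for every coset $x\langle z\rangle$, a unique representative $x^\star$ with $e\le x^\star<z$; and because this representative is characterized by an order-theoretic condition involving only elements already present in $L$ (namely $e$, $z$, and integer powers of $z$ multiplied by $x$), the representative computed inside $L$ coincides with the representative computed inside $L'$. Once the representatives agree, the definition of $R(\bar g,\bar h,\bar k)$ given in Section~\ref{subsec12} is verbatim the same in $L$ and in $L'$, finishing the proof.

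The only step that is not entirely formal is the verification that canonical representatives are stable under passing to the substructure; this is the part I would write out most carefully, though it is not really an obstacle. Everything else is bookkeeping with the explicit formulas.
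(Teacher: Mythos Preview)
Your proposal is correct and is exactly the routine verification the paper has in mind: the paper's own proof is the single line ``Easy to verify,'' and your case-by-case checking of the group law, the order, and the stability of canonical representatives in $[e,z)$ is precisely how one carries this out.
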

\begin{proof}. Easy to verify. \end{proof}
 We can generalize the winding construction:
 \begin{lem}\label{gwc}
 If $(G,\leq)$ is a linearly ordered group and $M$ a subgroup of $G$ which is discrete, central and cofinal, with first positive element $z$, and such that for each $g\in G$ there is $h$ in $M$ such that $h\leq g<hz$,( g lies between h and his successor is in $M$), then $G/M$ can be cyclically ordered following the winding construction. Moreover if $D$ is the convex hull of 
$\langle z\rangle$ in $G$, then $G/M=D/\langle z\rangle$.
 \end{lem}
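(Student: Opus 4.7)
The plan is to mimic the winding construction of Section~\ref{subsec12}, with the possibly non-cyclic subgroup $M$ playing the role previously held by $\langle z\rangle$, and then to recognize the resulting c.o.g.\ as the usual winding of $(D,\le)$ by $z$.

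First I would build a section $\pi\colon G\to[e,z)$ of the projection $G\to G/M$. The hypothesis yields, for each $g\in G$, some $h(g)\in M$ with $h(g)\le g<h(g)z$; this $h(g)$ is unique, because any $m\in M$ strictly between $h$ and $hz$ would give $e<h^{-1}m<z$ with $h^{-1}m\in M$, contradicting that $z$ is the first positive element of $M$. Setting $\pi(g)=h(g)^{-1}g$ and using that $h(g)\in Z(G)$, one checks that $\pi(g)\in[e,z)$, that $\pi$ is the identity on $[e,z)$, and that $a\mapsto aM$ is a bijection $[e,z)\to G/M$ with inverse $gM\mapsto\pi(g)$; the injectivity is the observation $M\cap[e,z)=\{e\}$.

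Next I would define $R$ on $G/M$ from the linear order on the representatives $\pi(g_i)\in[e,z)$ exactly as in Section~\ref{subsec12}. Axioms $R_1$--$R_4$ are immediate. For $R_5$, write $u=h(u)u_0$ and $v=h(v)v_0$ with $u_0,v_0\in[e,z)$; since $h(u),h(v)\in M\subseteq Z(G)$, we have $ug_iv\equiv u_0a_iv_0\pmod M$ with $a_i=\pi(g_i)$, and hence $\pi(ug_iv)=\pi(u_0a_iv_0)$. The map $a\mapsto u_0av_0$ is strictly increasing and carries $[e,z)$ onto $[u_0v_0,u_0v_0z)$ (an interval obtained from $[e,z)$ by translation, using centrality of $z$). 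If $h_0=h(u_0v_0)$, this image splits as $[u_0v_0,h_0z)\sqcup[h_0z,u_0v_0z)$; $\pi$ translates the first piece by $h_0^{-1}$ onto $[\pi(u_0v_0),z)$ and the second by $(h_0z)^{-1}$ onto $[e,\pi(u_0v_0))$. The composite self-map of $[e,z)$ is thus a cyclic rotation, which preserves the cyclic order and gives $R_5$.

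For the moreover statement I would first verify that $D$ is a subgroup: if $z^{-n_i}\le d_i\le z^{n_i}$ for $i=1,2$, centrality of $z$ yields $z^{-n_1-n_2}\le d_1d_2\le z^{n_1+n_2}$, and inverses are analogous. The key computation is $M\cap D=\langle z\rangle$: for $m\in M\cap D$, the set $\{k\in\ZZ:z^k\le m\}$ is nonempty and bounded above, so has a maximum $k_0$, and then $mz^{-k_0}\in M\cap[e,z)=\{e\}$ forces $m=z^{k_0}$. Therefore the inclusion $D\hookrightarrow G$ induces an injective group homomorphism $D/\langle z\rangle\to G/M$, which is also surjective because $\pi(g)\in[e,z)\subseteq D$ represents $gM$ for every $g\in G$. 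Since both cyclic orders---the winding order on $D/\langle z\rangle$ from Section~\ref{subsec12} and the one just defined on $G/M$---are read off from the same representatives in $[e,z)$, this isomorphism of groups is also an isomorphism of c.o.g.'s. The main obstacle I anticipate is the verification of $R_5$: in a non-Abelian $G$ one must track carefully how $\pi$ interacts with two-sided translation by $u$ and $v$, and confirm that the resulting self-map of $[e,z)$ really is a cyclic rotation of the fundamental domain and not some more intricate rearrangement.
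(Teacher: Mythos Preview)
Your proposal is correct and follows essentially the same approach as the paper: both define the cyclic order on $G/M$ via the unique representative in $[e,z)$ and then verify the axioms. Your treatment is considerably more detailed---the paper's proof merely records the uniqueness of the representative and declares the remaining verifications ``easy,'' without addressing $R_5$ or the moreover clause at all, whereas you carry out both carefully (and your handling of $R_5$ via the two-piece ``cyclic rotation'' of $[e,z)$, together with the computation $M\cap D=\langle z\rangle$, is exactly what is needed).
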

\begin{proof}. For each $g$ there is a unique $g'$ such that $\overline{g}=\overline{g'}$ and $e<g'<z$, so we can define $R(\overline{g},\overline{h},\overline{k})$ iff there are $g',h',k'$ such that $\overline{g}=\overline{g'}$, $\overline{h}=\overline{h'}$, $\overline{k}=\overline{k'}$, and $(e\leq g'<h'<k'<z$ or $e\leq h'<k'<g'<z$ or $e\leq k'<g'<h'<z)$ and it is easy to verify that this is a cyclic ordering of $G/M$. \end{proof}
\indent Looking at the behavior of the center in the winding construction we have:
\begin{lem}\label{uwcenter} Let $(G,R)$ be a c.o.g. and $Z(G)$ the center of $G$. Then $Z(uw(G))=uw(Z(G))$ and $Z(uw(G))/\langle z_G\rangle \cong Z(G)$.
\end{lem}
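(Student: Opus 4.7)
The plan is to reduce both claims to the commutation criterion in Remark \ref{rk23}(2), which states that $(k,g)$ and $(k',h)$ commute in $uw(G)$ iff $g$ and $h$ commute in $G$. This transfers the center computation from the somewhat awkward group law of $uw(G)$ to the ordinary center of $G$.

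First I would observe that $Z(G)$, regarded as a substructure of $(G,R)$, is itself a c.o.g., and that by the preceding winding/unwinding substructure lemma, $uw(Z(G))$ embeds canonically into $uw(G)$ as the set $\ZZ \times Z(G) \subseteq \ZZ \times G$, carrying the restricted order and the same distinguished element $(1,e)$; in particular $z_{Z(G)}$ is identified with $z_G$. Once this identification is in place, the equality $Z(uw(G)) = uw(Z(G))$ is simply a reformulation of Remark \ref{rk23}(2): an element $(k,g)$ commutes with all $(k',h)\in uw(G)$ iff $g$ commutes with every $h\in G$, i.e.\ iff $g\in Z(G)$, i.e.\ iff $(k,g)\in uw(Z(G))$.

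For the second statement I would note that $z_G = (1,e)$ lies in $uw(Z(G))$ (since $e\in Z(G)$), is central there (it is already central in the larger group $uw(G)$), and is cofinal in $uw(Z(G))$ because any $(k,g)\in uw(Z(G))$ satisfies $(k,g)<_R (k+1,e) = z_G^{k+1}$ by the definition of $\leq_R$. Applying Rieger's theorem (Remark \ref{rk23}(5)) to the c.o.g.\ $Z(G)$ therefore yields $uw(Z(G))/\langle z_G\rangle \cong Z(G)$, and combining with the first part gives $Z(uw(G))/\langle z_G\rangle \cong Z(G)$.

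The only genuine point to verify is the canonical identification of $uw(Z(G))$ with $\ZZ\times Z(G)$ inside $uw(G)$ (so that the two possible meanings of ``$(k,g)$'' agree and $z_{Z(G)}=z_G$); this is not really an obstacle, as it is exactly the content of the substructure lemma applied to the inclusion $Z(G)\hookrightarrow G$, but it is the one bookkeeping step where care is needed before the commutation criterion can be invoked mechanically.
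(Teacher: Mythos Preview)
Your proposal is correct and is precisely the intended argument: the paper's own proof is just ``Easy to verify,'' and the natural verification uses exactly the ingredients you name---Remark~\ref{rk23}(2) for the center computation, the substructure lemma for the identification $uw(Z(G))=\ZZ\times Z(G)\subseteq uw(G)$ with $z_{Z(G)}=z_G$, and Remark~\ref{rk23}(5) applied to $Z(G)$ for the quotient statement.
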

\begin{proof}. Easy to verify ($z_G$ was introduced in \ref{th21}). \end{proof}
\section{c-convex subgroups.}\label{section3}
\indent 
It will be usefull to have precise settings for the notion of c-convex subgroup of a c.o.g.:
\begin{deft}(\cite{JP}) Each c.o.g. is a {\it c-convex subgroup} in itself. If $(G,R)$ is a c.o.g. and 
$H$ a proper subgroup of $G$ , it is a {\it c-convex subgroup} if it does not contain a non-unit element of order $2$ and it is a c-convex set ($\forall h\in H,\forall g\in G, (R(h^{-1},e,h) \mbox{ and } R(e,g,h)) \Rightarrow g\in H$).
\end{deft}
\indent 
It is clear that if a c.o.g. is linear, then a subgroup is c-convex iff it is convex in the associated t.o.g.. One can verify that a c-convex subgroup $K$ of a c-convex subgroup $H$ of a c.o.g. $(G,R)$ is a c-convex subgroup of $(G,R)$.
\begin{lem}If $H$ is a c-convex subgroup of $(G,R)$ and $x,x',y,y',z,z'\in G$ satisfy $R(x,y,z)$ and $\overline{x}\neq \overline{y} \neq \overline{z}\neq \overline{x}$ 
and $\overline{x}= \overline{x'}, \overline{y}= \overline{y'}, \overline{z}= \overline{z'}$ then $R(x',y',z')$ (where $\overline{t}$ denotes the class of $t$ in  $G/H$).
\end{lem}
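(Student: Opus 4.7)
The plan is to reduce the general statement to a core lemma handling a single coordinate change, and then prove that core lemma via Rieger's theorem. First I would apply the translation-invariance axiom $R_5$ to multiply both sides on the left by $x^{-1}$ (respectively $x'^{-1}$), reducing to the case where the first coordinate is $e$. The problem then becomes: if $R(e, y, z)$ holds with $H, yH, zH$ pairwise distinct and $y' \in yH$, $z' \in zH$, then $R(e, y', z')$.

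Second, I would argue that it suffices to establish the following core claim: if $R(e, a, b)$ with $a, b \notin H$ and $aH \neq bH$, then for every $h \in H$, $R(h, a, b)$ holds. Combining this with the cyclic permutation axiom $R_3$ and a further use of $R_5$ allows one to replace any single coordinate by another element of its coset; three iterations then give the full lemma.

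Third, I would prove the core claim using the Rieger unwinding. Passing to $(uw(G), \leq)$ with central cofinal element $\zeta = z_G$, let $g^*$ denote the canonical lift of $g \in G$ into $[e, \zeta)$. The hypothesis $R(e, a, b)$ translates to $e < a^* < b^* < \zeta$, and the desired conclusion $R(h, a, b)$ amounts to requiring $h^* < a^*$ or $h^* > b^*$. The only obstruction is the configuration $a^* < h^* < b^*$, which I would exclude via c-convexity: if $(h^*)^2 < \zeta$, equivalently $R(h^{-1}, e, h)$, then c-convexity applied to $h$ forces $a \in H$, contradicting $a \notin H$; if $(h^*)^2 > \zeta$, then the canonical lift of $h^{-1}$ is $\zeta (h^*)^{-1}$ and satisfies the symmetric condition $((h^{-1})^*)^2 < \zeta$, so c-convexity applied to $h^{-1}$ combined with the computation $(bh^{-1})^* = b^* (h^*)^{-1} < \zeta (h^*)^{-1}$ forces $bh^{-1} \in H$, hence $b \in H$, again a contradiction. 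The degenerate case $(h^*)^2 = \zeta$ would make $h$ an element of order $2$ in $H$, which is excluded by the very definition of c-convex subgroup.

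The main obstacle, I expect, will be the second sub-case of the case analysis: one must have the presence of mind to pass to $h^{-1}$ rather than $h$, compute its canonical lift correctly, and then identify the correct auxiliary element ($bh^{-1}$) whose canonical lift lands in the small enough interval for c-convexity to bite. Everything else is routine manipulation with the axioms $R_1$--$R_5$ and the Rieger correspondence.
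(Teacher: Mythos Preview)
Your proof is correct, but it takes a different route from the paper. The paper also reduces to changing a single coordinate (it proves $R(x,y',z)$ directly, then iterates via cyclicity), but it does \emph{not} pass through the Rieger unwound group. Instead, it argues by contradiction purely from the axioms $R_1$--$R_5$: assuming $R(y',x,z)$, it combines this with $R(x,y,z)$ to obtain $R(e,zy^{-1},y'y^{-1})$ and $R(e,xy'^{-1},yy'^{-1})$ by right translation, and then observes that one of $y'y^{-1}$, $yy'^{-1}$ satisfies $R(h^{-1},e,h)$, so c-convexity forces $zy^{-1}\in H$ or $xy'^{-1}\in H$, contradicting the pairwise distinctness of the cosets. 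Your version linearises the situation via canonical lifts in $uw(G)$ and splits on $(h^*)^2\lessgtr z_G$; this is conceptually the same dichotomy (which of $h$, $h^{-1}$ lies in the ``small'' half of the circle), just expressed in the unwound group. The paper's argument is shorter and entirely intrinsic to the axioms, while yours is a bit heavier but makes the geometry of the obstruction transparent. Either way the key insight is the same: the failure case traps an element between $e$ and a ``small'' member of $H$, and c-convexity closes the trap.
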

\begin{proof}. Since $R$ is cyclic, it suffices to prove that $R(x,y',z)$. Suppose not, then $R(y',x,z)$, hence $R(x,y,z,y')$,  $R(e,zy^{-1},y'y^{-1})$. From $R(y',x,y)$ we have also $R(e,xy'^{-1},yy'^{-1})$. 
\indent 
If $R(yy'^{-1},e,y'y^{-1})$ since $H$ is c-convex $zy^{-1}\in H$ contradicting $\overline{z}\neq \overline{y}$. The case  when $R(y'y^{-1},e,yy'^{-1})$ is similar. \end{proof}
\begin{rek}
 Let $(G,R)$ be a c.o.g., $H$ be a proper normal subgroup of $G$, $S$ be the large cyclic order associated to $R$ ($S(x,y,z)$ iff $R(x,y,z)$ or $y=x$ or $y=z$), $\overline{S} $ the quotient relation ($\exists x',y',z'\in G, x x'^{-1}, yy'^{-1}, zz'^{-1}\in H $ and $S(x',y',z')$). 
Let $f$ be the associated surjective group homomorphism from $G$ onto $G/H$, and 
$\overline{R} $ the relation defined by $\overline{R}(\overline{x},\overline{y},\overline{z})$ iff $(\overline{x}\neq \overline{y}\neq \overline{z}\neq \overline{x}$ and $\overline{S}(\overline{x},\overline{y},\overline{z}))$.
Consider the three following conditions:
\begin{enumerate}
\item\label{cond1}
$H$ is a c-convex-subgroup of $G$
\item\label{cond2}
$\overline{R}$ is a cyclic order on $G/H$
\item\label{cond3}
$f$ is a c-homomorphism.
\end{enumerate}
\indent 
The conditions \ref{cond2} and \ref{cond3} are equivalent and they are consequences of \ref{cond1}. One can prove that if $G/H \not\subseteq \ZZ/2\ZZ$, then those three conditions are equivalent, but when 
$G=\ZZ/4\ZZ$, $H=2\ZZ$ and $f$ is defined by $f(x)=\overline{x}$ and $H=f^{-1}(e)$, then $f$ is a c-homomorphism but $H$ is not c-convex.
\end{rek}
\begin{thm} (\cite{JP}) Let $G$ be a nonlinear c.o.g., there is a largest c-convex subgroup of $(G,R)$, it will be denoted by $G_{0}$ and is called the subgroup of infinitely small elements of $G$. $G_{0}$ is linear and it is a normal subgroup of $G$.
\end {thm}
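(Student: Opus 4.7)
The plan is to use Rieger's theorem (Theorem~\ref{th21}) to reduce the problem to the study of convex subgroups of the Rieger unwound $F := uw(G)$. Let $z := z_G \in F$ be the central, cofinal element such that $G = F/\langle z\rangle$, and let $\pi \colon F \to G$ denote the quotient map. In the t.o.g.\ $F$, convex subgroups form a chain under inclusion, so
$$F_0 := \{f \in F : |f|^n < z \text{ for all } n \in \NN\}$$
is the largest convex subgroup of $F$ that does not contain $z$. Since $z$ is cofinal in $F$, the only convex subgroup of $F$ containing $z$ is $F$ itself, so $F_0$ is in fact the unique maximal proper convex subgroup of $F$. Convexity gives $F_0 \cap \langle z\rangle = \{e\}$ (otherwise some $z^k \in F_0$ with $k\geq 1$ would drag $z$ into $F_0$), and centrality of $z$ makes $F_0$ normal in $F$: any conjugate $gF_0g^{-1}$ is again a convex subgroup of $F$ not containing $z = gzg^{-1}$, hence $gF_0g^{-1}\subseteq F_0$.

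Define $G_0 := \pi(F_0)$. Injectivity of $\pi|_{F_0}$ yields $G_0 \cong F_0$ as groups, so $G_0$ is normal in $G$ and carries the linear order inherited from $F_0$. A case analysis with representatives in $[e, z)$ shows that the cyclic order on $G_0$ inherited from $G$ agrees with the one induced by this linear order, so $G_0$ is a linear c.o.g. For c-convexity: if $\overline{h} \in G_0$ satisfies $|\overline{h}| = \overline{h}$, then the unique lift $h_0 \in F_0$ of $\overline{h}$ must satisfy $h_0 \geq e$ (otherwise $h_0^2 > z^{-1}$ forces $|\overline{h}|=\overline{h}^{-1}$), hence the representative of $\overline{h}$ in $[e, z)$ is $h_0 \in F_0$; any $\overline{g}$ with $R(e, \overline{g}, \overline{h})$ then has representative $g \in (e, h_0) \subseteq F_0$ by convexity, so $\overline{g} \in G_0$.

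The crux is maximality: let $H$ be any proper c-convex subgroup of $G$; we prove $H \subseteq G_0$. Take $\overline{h} \in H$ nontrivial and replace it by $|\overline{h}| \in H$ to assume $|\overline{h}| = \overline{h}$; let $h \in [e, z)$ be its representative, so $h^2 < z$. Suppose toward a contradiction $h \notin F_0$. By maximality of $F_0$, the convex subgroup of $F$ generated by $h$ contains $z$, giving a smallest $n$ with $z \leq h^n$, necessarily $n \geq 3$ (as $h<z$ and $h^2<z$). C-convexity of $H$ yields $\pi([e, h)) \subseteq H$. For any $\overline{g} \in G$ with representative $g \in [e, z)$, set
$$k := \max\{j \geq 0 : h^j \leq g\};$$
this set is nonempty (contains $0$) and bounded above by $n-1$ (since $g < z \leq h^n$), so $k$ is well-defined with $k \leq n-1$. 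Then $h^k \leq g < h^{k+1}$, so $gh^{-k} \in [e, h)$ is the representative of $\overline{g}\,\overline{h}^{-k}$; hence $\overline{g}\,\overline{h}^{-k} \in \pi([e, h)) \subseteq H$, and multiplying by $\overline{h}^k \in H$ gives $\overline{g} \in H$. Thus $H = G$, contradicting properness.

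Finally, the nonlinearity hypothesis is used exactly to secure $G_0 \neq G$: if $G_0 = G$ then $F = F_0 \cdot \langle z\rangle$, and together with $F_0 \cap \langle z\rangle = \{e\}$ and centrality of $z$ this identifies $F$ with the ordered direct product $F_0 \times \langle z\rangle$ (lexicographic with $\langle z\rangle$ on top), forcing $G \cong F_0$ to be linear, a contradiction. The main obstacle is paragraph~3: one must verify that the ``floor'' $k=\max\{j : h^j \leq g\}$ is well-defined in an arbitrary t.o.g.\ (it is, thanks to $z \leq h^n$), and then observe that the translate $\overline{g}\,\overline{h}^{-k}$ lands precisely inside the c-convex interval $\pi([e, h))$ supplied by c-convexity of $H$.
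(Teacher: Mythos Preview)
Your proof is correct. The paper itself does not supply a proof of this theorem; it is quoted from \cite{JP} and stated without argument. So there is no ``paper's own proof'' to compare against line by line.

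That said, your route via the Rieger unwound is exactly the perspective the paper adopts in the paragraph immediately following the theorem, where it observes that the map $f(x)=(0,x)$ for $x\in P$ and $f(x)=(-1,x)$ otherwise, restricted to $G_0$, is an order isomorphism onto a convex piece of $uw(G)$. Your $F_0$ is precisely the image of that map, and your argument makes explicit what the paper only alludes to: that proper c-convex subgroups of $G$ correspond to proper convex subgroups of $uw(G)$ not containing $z_G$, whence the chain structure and the existence of a maximum come for free from the linear theory. The maximality step (the ``floor'' argument with $k=\max\{j:h^j\le g\}$) is the one genuinely new ingredient, and it is carried out correctly; the bound $k\le n-1$ coming from $h^n\ge z>g$ is exactly what makes it work in an arbitrary t.o.g. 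A couple of your parenthetical justifications are terse (e.g.\ ``$h_0^2>z^{-1}$ forces $|\bar h|=\bar h^{-1}$'' really means: the $[e,z)$-representatives of $\bar h$ and $\bar h^2$ are $h_0z$ and $h_0^2z$, and $h_0<e$ gives $h_0^2z<h_0z$, so $R(e,\bar h^2,\bar h)$), and you should remark explicitly that $G_0$ has no element of order $2$ because $G_0\cong F_0$ is torsion-free --- but these are cosmetic.
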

\indent 
Each proper c-convex subgroup of $(G,R)$ is linear and the set of c-convex subgroups of $(G,R)$ is linearly ordered by inclusion. It is closed by finite or infinite unions or intersections. For each $g\in G$ there is a smallest c-convex subgroup of $G$ containing $g$.\\
\indent 
One can prove that the map from $G$ into $uw(G)$ defined by $f(x)=(0,x)$ if $ x\in P$ and $f(x)=(-1,x)$ if not, when restricted to $G_{0}$ is a group homomorphism and an order isomorphism between $(G_{0},\leq_{R} )$ and $G_{0uw}=\{(0,x), x\in (P\bigcap G_{0})\} \bigcup \{(-1,x),x\neq e, x\in P^{-1}\bigcap G_{0})\}$.\\
\indent 
So, when $(G,R)$ is nonlinear we have an order isomorphism between the inclusion-ordered set of c-convex subgroups of $(G,R)$ and the inclusion-ordered set of convex subgroups of $(G,R)$.
\begin{deft}\cite{Sw} A c.o.g. $(G,R)$ is said to be {\it c-Archimedean} if for all $g$ and $h$ there is an integer $n$ such that $R(e,g^n,h)$ is not satisfied.
\end {deft}
\indent 
Examples.
\begin{enumerate}
\item $(G,R)$ is c-Archimedean iff it can be embedded in $\KK$, iff it is nonlinear and has no proper c-convex subgroup.
\item If $(G,R)$ is nonlinear and c-Archimedean then its unwound $(uw(G),\leq)$ is Archimedean.
\item \label{non-A} $\ZZ$ can be equipped with a non-Archimedean c.o.: consider $(G,R)=(\ZZ/3\ZZ)\overrightarrow{\times}\ZZ$, the subgroup $H$ generated in $G$ by $(\overline{1},1)$ equiped with the inherited cyclic order, and $C$ the c-convex subgroup of $(G,R)$ and $(H,R)$ generated by $(0,3)$. The group H is isomorphic to $\ZZ$ and the c.o.g. $(H,R)$ admits $C$ as a proper c-convex subgroup, so it is not c-Archimedean.
\end{enumerate}
\begin{deft}
 Let $(G,R)$ and $(G',R')$ be two c.o.g., $(G,R)$ being a substructure of $(G',R')$, $(G,R)$ is said to be {\it dense} in $(G',R')$ if\\
$\forall x',y' \in G' ((\exists z'\in G'\mbox{ } R(x',z',y')) \Longrightarrow (\exists z\in G \mbox{ }R(x',z,y'))$.
\end{deft}
\begin{rek}.\\
\begin{enumerate}
\item If $(G,R)$ and $(G',R')$ are linear then $(G,R)$ is dense in $(G',R')$ and $(G,\leq)$ is dense in $(G',\leq')$ for the associated linear orders.
\item
Each infinite subgroup of $\KK$ (equipped with the inherited cyclic order) is dense in $\KK$.
\end{enumerate}
\end{rek}
\indent 
As usual we say that a subgroup $H$ of $G$ is pure in $G$ if for each $h\in H$ and each integer $n$, if there exists $g\in G$ such that $g^n=h$ then there exists $h'\in H$ such that $h'^n=h$.
 \begin{rek}.\\
 \begin{enumerate}
 \item A c.o.g. can have torsion and the equation $x^n=y$ can have several solutions.
 \item A c-convex subgroup is not always a pure subgroup: let $(G,R)=(\ZZ/3\ZZ)\overrightarrow{\times}\ZZ$ and  $H$ the subgroup generated in $G$ by $(\overline{1},1)$ then $(0,3)=3(0,1)$, the subgroup $C$ generated by $(0,3)$ is a c-convex subgroup of $H$, $(0,3)$ is not divisible by $3$ in $C$, but it is divisible by $3$ in $H$. We shall see in \ref{pur} that if a c.o.g. contains a substructure isomorphic to $\UU$ then each of its c-convex subgroups is pure. 
\end{enumerate}
\end{rek}
\section{Elementary equivalence and substructure.}\label{section4}
\indent 
Now we prove a transfer principle between the elementary equivalence of two c.o.g. and the elementary equivalence of their Rieger-unwound totally ordered groups.
\begin{thm}\label{treq}
Let $(G,R)$ and $(G',R')$ be two c.o.g. then: \\
$(G,R)\equiv (G',R')$ iff $(uw(G),z_{G},\leq_{R})\equiv (uw(G'),z_{G'},\leq_{R'})$,\\
and the same property holds for elementary inclusion.
\end{thm}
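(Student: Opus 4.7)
The plan is to prove both implications (together with their analogues for elementary inclusion) by setting up uniform first-order interpretations in each direction.

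For the direction from the unwound to the cyclically ordered group, I would interpret $(G,R)$ inside $(uw(G),\leq_R,z_G)$. By Remark \ref{rk23}(5), together with the discreteness and cofinality of $\langle z_G\rangle$ in $uw(G)$, every element of $G$ has a unique representative in the interval $I=\{y\in uw(G):\ e\leq y<z_G\}$, which is definable from $z_G$ and $\leq_R$ alone. On this definable domain, the group law (distinguishing whether the product falls in $I$ or in $I\cdot z_G$) and the cyclic order $R$ (reading off the three cyclic patterns of $<$, as in \ref{subsec12}) are expressed by quantifier-free formulas. Hence every $L_c$-sentence $\varphi$ admits a translation $\varphi^{\dagger}$ in the language of ordered groups with one distinguished constant, such that $(G,R)\models\varphi$ iff $(uw(G),\leq_R,z_G)\models\varphi^{\dagger}$, and similarly for $G'$. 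Elementary equivalence, as well as elementary inclusion, therefore transfers from the unwounds down to the c.o.g.'s.

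For the opposite direction, I would interpret $(uw(G),\leq_R,z_G)$ inside the two-sorted structure $\mathcal{M}(G):=(G,R)\sqcup(\ZZ,+,0,<)$, whose two sorts carry disjoint atomic vocabularies. The domain of the interpretation is the product sort $\ZZ\times G$, the constant is $(1,e)$, and by the explicit clauses of Theorem \ref{th21} and Remark \ref{rk23}(1--2) the group law and the order $\leq_R$ are defined uniformly in the two-sorted signature: $R$ controls the $G$-coordinate and tells us when to add a unit carry to the $\ZZ$-coordinate, and the $\ZZ$-coordinate governs the leading comparison. Consequently every sentence about the unwound translates to a sentence in the two-sorted language about $\mathcal{M}(G)$.

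The argument therefore reduces to showing that $(G,R)\equiv(G',R')$ implies $\mathcal{M}(G)\equiv\mathcal{M}(G')$ (and the analogous implication for elementary inclusion). This is a Feferman--Vaught-style fact for two-sorted structures with disjoint vocabulary: by induction on complexity, every formula $\psi(\vec{x},\vec{y})$ with $\vec{x}$ of sort $G$ and $\vec{y}$ of sort $\ZZ$ is logically equivalent to a finite Boolean combination $\bigvee_i(\alpha_i(\vec{x})\wedge\beta_i(\vec{y}))$ with $\alpha_i$ in $L_c$ and $\beta_i$ in the language of $(\ZZ,+,0,<)$; the key inductive step for $\exists x\,\psi$ (or $\exists y\,\psi$) goes through because the corresponding quantifier commutes past the Boolean combination, affecting only one sort. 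Hence the truth of any $\mathcal{M}(G)$-sentence is determined by the separate theories of $(G,R)$ and of $(\ZZ,+,0,<)$; as the latter is fixed, $(G,R)\equiv(G',R')$ suffices, and the same normal-form argument handles elementary embeddings.

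I expect the Feferman--Vaught step to be the only non-routine point, since one must carefully verify that mixed quantifier alternations of the form $\forall x\in G\,\exists m\in\ZZ\,\psi$ really do collapse into Boolean combinations of single-sort conditions once the inner matrix has been brought to normal form. Everything else reduces to checking the explicit formulas of Theorem \ref{th21} and Remark \ref{rk23} and to invoking standard properties of first-order interpretations, which automatically yield both the elementary equivalence and the elementary inclusion statements.
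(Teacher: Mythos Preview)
Your proposal is correct and follows the same overall architecture as the paper: interpret $(G,R)$ inside the unwound for one direction, and for the other direction interpret the unwound in the product $\ZZ\times G$ and invoke Feferman--Vaught (this is exactly the content of Lemmas \ref{Guw} and \ref{eqeluw}). The one substantive difference is in how the implication $(uw(G),z_G)\equiv(uw(G'),z_{G'})\Rightarrow (G,R)\equiv(G',R')$ is obtained. The paper first works in the language $L_M$ with a unary predicate for the subgroup $\langle z_G\rangle$, and then has to pass from $L_M$ to $L_z$; since $\langle z_G\rangle$ is not in general definable from $z_G$, this step (Lemma \ref{uw<>}(2)) is handled by an ultrapower argument together with the generalized winding construction of Lemma \ref{gwc}. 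You instead interpret $(G,R)$ directly in $(uw(G),\leq_R,z_G)$ via the definable interval $[e,z_G)$, with the product and cyclic order read off by the explicit formulas of the winding construction. This is legitimate (every coset of $\langle z_G\rangle$ meets $[e,z_G)$ exactly once, the product of two representatives lies in $[e,z_G^2)$, and centrality of $z_G$ makes the carry rule well defined), and it makes the whole argument more elementary by eliminating the saturation/ultrapower step. The paper's route, on the other hand, isolates the intermediate structure $(uw(G),\langle z_G\rangle)$, which may be of independent interest but is not needed for the theorem itself.
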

\indent 
The proof goes through Lemmas \ref{Guw}, \ref{eqeluw} and \ref{uw<>}. In those lemmas $(G,R)$ and $(G',R')$ will be two c.o.g.. Let  $(uw(G),\leq_{R})$ be the unwound (linearly ordered group) of $(G,R)$ and $z_{G}\in uw(G)$ such that $(G,R)$ is canonically isomorphic to 
$uw(G)/\langle  z_{G} \rangle$. We consider the four different structures:
\begin{enumerate}
\item
$(G,R)$ in the language $L_{c}$ of c.o.g.
\item
$(uw(G),\langle z_{G}\rangle)$ in the language $L_{M}$ of pairs of ordered groups, containing the group law, the order relation and a predicate $M$ for a subgroup.
\item 
$(uw(G),z_{G})$ in the language $L_{z}$ of ordered groups with a specified element.
\item
$\ZZ\times G$ in the language $L_{ZG}$ with two unary predicates $Z$ and $G$ interpreted by $\ZZ \times \{0\}$ and $\{0\} \times G$, predicates for the group law and the order relation on $ Z$ and the cyclic order relation on $G$.
\end{enumerate}
\begin{lem}.\label{Guw}\\
\begin{enumerate}
\item
$(G,R)$ can be interpreted in  $(uw(G),\langle z_{G}\rangle)$ in the language $L_M$.
\item
$(uw(G),\langle z_{G}\rangle)$ can be interpreted in $\ZZ\times G$.
\end{enumerate}
\end{lem}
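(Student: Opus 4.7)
The plan is to exhibit, for each part, an explicit interpretation scheme that directly transcribes constructions already in the paper (the winding construction and Rieger's formulas from Theorem \ref{th21}).

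For (1), the first step is to observe that $z_G$ itself is $L_M$-definable from the predicate $M$: it is the unique least positive element of $M$, picked out by the formula $e < x \wedge M(x) \wedge \forall y\,(e < y < x \Rightarrow \neg M(y))$. Once $z_G$ is available as a definable constant, I would take the universe of the interpreted $(G,R)$ to be the fundamental domain $F := \{x \in uw(G) : e \leq x < z_G\}$, which by Lemma \ref{gwc} is in definable bijection with $uw(G)/\langle z_G\rangle \cong G$. The group operation of $G$ is then interpreted as: $\mathrm{mult}_G(x,y) = w$ iff $w \in F$ and $xyw^{-1} \in M$, using the ambient group law of $uw(G)$. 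For the cyclic order, since representatives live in $[e,z_G)$, set $R(x,y,w)$ for pairwise distinct $x,y,w \in F$ to hold iff $x<y<w$ or $y<w<x$ or $w<x<y$; this is a verbatim reading of the winding construction.

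For (2), the underlying set of $(uw(G), \langle z_G\rangle)$ is literally $\ZZ \times G$, and I would code each element $(m,g)$ by a pair $(a,b) \in Z \times G$ with $a$ corresponding to the integer component and $b$ to the group component. With this pairing, the order and product of $uw(G)$ are reproduced verbatim from Theorem \ref{th21}: the order $\leq_R$ is a Boolean combination of $<$ on $Z$, equality, and $R(e,b_1,b_2)$ on $G$; the product splits into finitely many cases according to whether $b_1 = e$, whether $b_1 b_2 = e$, and whether $R(e, b_1, b_1 b_2)$ holds, each case returning a pair whose first coordinate is $a_1 + a_2$ or $a_1 + a_2 + 1_Z$ (addition in $Z$) and whose second coordinate is $b_1 b_2$ (product in $G$). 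The predicate for $\langle z_G\rangle$ is simply $b = e$.

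The main thing to verify in both parts is that each formula is genuinely first-order in the specified language, rather than requiring second-order ingredients. In (1), this reduces to expressing $z_G$ from $M$, which we did above; in (2), it reduces to noting that every case of Theorem \ref{th21} is quantifier-free in the symbols available in $L_{ZG}$ (with the group operation on $G$ understood as part of that language, as it must be if the cyclic order relation on $G$ is to be meaningful). There is no deep obstacle: both interpretations are essentially bookkeeping on constructions already carried out earlier in the paper.
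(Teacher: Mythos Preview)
Your proposal is correct and takes essentially the same approach as the paper. For part~(1), both you and the paper use the fundamental domain $[e,z_G)$ as the interpreted universe (the paper phrases the cut-off via a universal quantifier over positive elements of $M$ rather than first isolating $z_G$, but this is equivalent), define the product by the condition $M(xyw^{-1})$, and read $R$ off from the ambient linear order; for part~(2), both interpretations simply transcribe the defining clauses of the Rieger unwound verbatim into $L_{ZG}$, with $\langle z_G\rangle$ picked out by the second coordinate being $e$.
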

\begin{proof}.
\begin{enumerate}
\item
In $(uw(G),\langle z_{G}\rangle)$ with the language $L_{M}$ define \\
\indent 
$\Gamma =\{ g\in uw(G) ;\; \exists g'(g'=g \mbox{ or } g'=g^{-1})\mbox{ and } 
g'\geq e \mbox{ and } \forall t>e \mbox{ }(M(t)\Longrightarrow e\leq g'<t)\}$.\\
\indent 
For $g,h,k \in \Gamma$ define:\\
\hspace{1cm} $R(g,h,k) $ iff $(g<h<k \mbox{ or } h<k<g \mbox{ or }k<g<h )\mbox{ and } 
g\cdot h=k \mbox{ iff } M(ghk^{-1})$.\\
\indent 
Then $(\Gamma,R)$ is isomorphic to $(G,R)$.
\item
In $\ZZ\times G$ interpret $uw(G)$ by the whole set  and $\langle z_{G}\rangle$ 
(which is defined by the predicate $M$) by $\ZZ\times \{e\}$. The order relation $\leq $ will be defined from the order of $\ZZ$ and the cyclic order of $G$:
$v\leq v'$ iff\\
\hspace{1cm} $(\exists c,c',r,r')(Z(r),Z(r'),G(c), G(c') \mbox{ and }$\\
\hspace{1cm} $\mbox{  } v=r\cdot c \mbox{ and } v'=r'\cdot c' \mbox{ and } ((c=c' \mbox{ and } r=r') \mbox{ or }r<r' \mbox{ or } (r=r' \mbox{ and } R(e,c,c'))))$;\\
\indent 
and the group law is given by:
$v\cdot v'=v''$ iff\\
\hspace{1 cm}$(\exists c,c',c'',r,r',r'')(Z(r),Z(r'), Z(r''), G(c), G(c'), G(c'') \mbox{ and } v=c\cdot r \mbox{ and } v'=c'\cdot r' \mbox{ and }$\\
\hspace{1,5 cm}$v''=r''\cdot c'' \mbox{ and } c''=c\cdot c' \mbox{ and }((c=e \mbox{ or }c'=e \mbox{ or } R(e,c,c\cdot c')\Longrightarrow r''=r+r') \mbox{ and }$\\
\hspace{1,5 cm} \hspace{0,5 cm} $((c\neq e \mbox{ and }c'\neq e \mbox{ and }
R(e,c\cdot c',c))$ or\\
\hspace{1,5 cm} \hspace{1 cm} $(c\neq e \mbox{ and } c'\neq e \mbox{ and } c\cdot c'=e ))\Longrightarrow r''=r+r'+1 )).$
\end{enumerate}
\end{proof}
\begin{lem}\label{eqeluw}
 $(G,R)$ and $(G',R')$  are elementary equivalent in $L_{c}$ iff $(uw(G),\langle z_{G}\rangle)$ 
and $(uw(G'),\langle z_{G'}\rangle)$ are elementary equivalent in $L_{M}$, and the same property holds for elementary inclusion.
\end{lem}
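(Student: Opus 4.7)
The plan is to chain the two interpretations supplied by Lemma \ref{Guw} through a sort-separation argument for the two-sorted structure $\ZZ\times G$.

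First I would observe that both interpretations in Lemma \ref{Guw} are parameter-free: in part (1), the set $\Gamma$, the cyclic relation $R$, the group law and the inverse on $\Gamma$ are defined by $L_M$-formulas using only $<$, $\cdot$ and the predicate $M$; in part (2), the universe of $uw(G)$, its subgroup $M$, its order and its group law are defined by $L_{ZG}$-formulas using only the sort predicates $Z,G$, the ordered-group structure on $Z$ and the cyclic order on $G$. Parameter-free interpretations transfer elementary equivalence and elementary substructure. Applying this to Lemma \ref{Guw}(1) yields the implication $(\Leftarrow)$ at once: if $(uw(G),\langle z_G\rangle)\equiv(uw(G'),\langle z_{G'}\rangle)$ in $L_M$, then the $L_c$-structures they interpret, namely $(G,R)$ and $(G',R')$, are elementarily equivalent, and likewise for elementary inclusions.

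For the implication $(\Rightarrow)$ I would interpose $\ZZ\times G$. The decisive remark is that $L_{ZG}$ has \emph{no} symbols connecting the two sorts: the group law and the order live only on $Z$, the ternary relation lives only on $G$. By a routine induction on formula complexity (pushing quantifiers past Boolean connectives and using the sort predicates to restrict them), every $L_{ZG}$-sentence is equivalent to a Boolean combination of sentences whose variables lie entirely in $Z$ and sentences whose variables lie entirely in $G$. Consequently $\ZZ\times G\equiv\ZZ\times G'$ in $L_{ZG}$ iff $\ZZ\equiv\ZZ$ (trivially true) and $(G,R)\equiv(G',R')$ in $L_c$. Composing this equivalence with the interpretation of Lemma \ref{Guw}(2) gives $(uw(G),\langle z_G\rangle)\equiv(uw(G'),\langle z_{G'}\rangle)$ in $L_M$.

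The statement about elementary inclusion follows from exactly the same chain, replacing $\equiv$ by $\preceq$ throughout: elementary inclusion is preserved by parameter-free interpretations in both directions, and the sort-separation argument shows that $(G,R)\preceq(G',R')$ in $L_c$ implies $\ZZ\times G\preceq\ZZ\times G'$ in $L_{ZG}$. I do not expect a genuine obstacle: the only non-trivial ingredient is the standard disjoint-sorts splitting lemma for $L_{ZG}$, and it is a textbook exercise wholly independent of the Rieger construction or the cyclic order. The work of the lemma has really been done in Lemma \ref{Guw}; what remains is bookkeeping about interpretations and one observation about languages without cross-sort symbols.
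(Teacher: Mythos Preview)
Your argument is correct and follows the same route as the paper: the $(\Leftarrow)$ direction comes from Lemma~\ref{Guw}(1), and the $(\Rightarrow)$ direction passes through $\ZZ\times G$ and then Lemma~\ref{Guw}(2). The only difference is that where the paper simply cites Feferman--Vaught \cite{FV} for the step $(G,R)\equiv(G',R')\Rightarrow \ZZ\times G\equiv\ZZ\times G'$, you supply the easy direct proof via sort separation; this is exactly the trivial instance of Feferman--Vaught for a product of two structures in disjoint languages, so the two proofs are really the same.
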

\begin{proof}. We know that $(G,R)\equiv (G',R')$ implies (by model theoretic arguments  \cite{FV} ) $\ZZ\times G \equiv \ZZ\times G'$ which implies 
$(uw(G),\langle z_{G}\rangle,\leq)\equiv_{L_{M}} (uw(G'),\langle z_{G'}\rangle ,\leq ')$ 
by Lemma \ref{Guw},2. The converse follows from Lemma \ref{Guw},1, we obtain the result for elementary inclusion in the same way.
\end{proof}
\begin{lem}\label{uw<>}
$(uw(G),\langle z_{G}\rangle,\leq)\equiv_{L_{M}}(uw(G'),\langle z_{G'}\rangle,\leq')$ iff $(uw(G),z_{G})\equiv_{L_{z}}(uw(G'),z_{G'})$, and the same property holds for elementary inclusion.
\end{lem}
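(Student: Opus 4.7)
The plan is to prove the two implications separately; direction 1 is immediate, while direction 2 is delicate and is handled by an interpretation argument that reduces it, via Lemma \ref{eqeluw}, to the $L_c$/$L_M$ correspondence already established.

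\textbf{Direction 1 ($L_M \Rightarrow L_z$).} The element $z_G$ is first-order $L_M$-definable in $(uw(G),\langle z_G\rangle,\leq)$ as the unique least positive element of $M$: this minimum exists since, by Theorem \ref{th21} and Remark \ref{rk23}, $\langle z_G\rangle$ is the cyclic subgroup of $uw(G)$ generated by the positive element $z_G$ and, as a subgroup of a t.o.g.\ containing a positive element, is order-isomorphic to $\ZZ$. Replacing $z$ by this definable term turns any $L_z$-formula into an equivalent $L_M$-formula; this translation transfers $L_M$-elementary equivalence (and elementary inclusion) of the unwound pairs to the $L_z$-equivalence (resp.\ inclusion) of $(uw(G),z_G)$ and $(uw(G'),z_{G'})$.

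\textbf{Direction 2 ($L_z \Rightarrow L_M$).} The idea is to interpret $(G,R)$ inside $(uw(G),z_G)$ using only $L_z$. Let $\Gamma := \{x\in uw(G) : e\leq x<z_G\}$, which is $L_z$-definable. By the winding construction of Section \ref{subsec12}, every coset of $\langle z_G\rangle$ has a unique representative in $\Gamma$, giving a bijection $\Gamma \to G$, $(0,g)\mapsto g$. On $\Gamma$, define a binary operation $*_\Gamma$ and a ternary relation $R_\Gamma$ by
\[
a *_\Gamma b = c \iff c\in\Gamma \wedge \bigl(a\cdot b = c\; \vee\; a\cdot b = c\cdot z_G\bigr),
\]
and $R_\Gamma(a,b,c)$ iff $a,b,c\in\Gamma$ are pairwise distinct and $(a<b<c \vee b<c<a \vee c<a<b)$. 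Since $a,b<z_G$ forces $ab<z_G^2$ in the t.o.g.\ $uw(G)$, $*_\Gamma$ is total; a direct verification using the multiplication rules of Rieger's construction identifies $*_\Gamma$ and $R_\Gamma$, via the bijection above, with the group law and cyclic order of $G$. This yields a first-order $L_z$-interpretation of $(G,R)$ in $(uw(G),z_G)$. Consequently, $L_z$-equivalence (resp.\ elementary inclusion) of $(uw(G),z_G)$ and $(uw(G'),z_{G'})$ transfers through the interpretation to $L_c$-equivalence (resp.\ inclusion) of $(G,R)$ and $(G',R')$, and Lemma \ref{eqeluw} then provides the desired $L_M$-statement.

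\textbf{Main obstacle.} The difficulty is concentrated in direction 2: $\langle z_G\rangle$ need not be first-order definable from $z_G$ in the ordered group $uw(G)$ (as happens already in divisible examples such as $(\QQ,1)$). The key observation avoiding this is that although $M=\langle z_G\rangle$ itself is out of reach, the quotient $uw(G)/\langle z_G\rangle$ is canonically realized as the $L_z$-definable interval $[e,z_G)$ with $L_z$-definable group law and cyclic order; so we interpret $(G,R)$ directly and then route through $L_c$ via Lemma \ref{eqeluw} to obtain $L_M$-equivalence, instead of trying to define $M$ from $z$ in $L_z$.
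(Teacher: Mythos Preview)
Your proof is correct, and direction~1 coincides with the paper's argument. For direction~2, however, the paper proceeds differently: it invokes Keisler--Shelah to take $L_z$-isomorphic ultrapowers $(H,z)\simeq (H',z')$ of the two unwound structures, lets $D$ and $D'$ be the corresponding ultrapowers of $\langle z_G\rangle$ and $\langle z_{G'}\rangle$, and then uses the generalized winding construction (Lemma~\ref{gwc}) to identify $(H/D,R)$ with $(B(z)/\langle z\rangle,R)$, where $B(z)$ is the convex hull of $\langle z\rangle$ in $H$; the $L_z$-isomorphism then carries $(B(z)/\langle z\rangle,R)$ to $(B(z')/\langle z'\rangle,R')$, giving $(G,R)\equiv (G',R')$, and Lemma~\ref{eqeluw} closes the loop. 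Your route bypasses the ultrapowers and Lemma~\ref{gwc} entirely by noticing that the interpretation of $(G,R)$ given in Lemma~\ref{Guw}(1) can already be carried out in $L_z$ (via the definable interval $[e,z_G)$), so $L_z$-equivalence yields $L_c$-equivalence directly. This is more elementary and arguably cleaner; the paper's detour through ultrapowers is what necessitated introducing the generalized winding construction in Section~\ref{section2}, whereas your argument does not need it. Both approaches handle the elementary-inclusion variant in the same way.
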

\begin{proof}.
\begin{enumerate}
\item
$z_{G}$ is definable in $\langle z_{G}\rangle$, it is the least positive element, hence: 
$$(uw(G),\langle z_{G}\rangle,\leq)\equiv_{L_{M}} (uw(G'),\langle z_{G'}\rangle ,\leq ') 
\mbox{ implies } (uw(G),z_{G})\equiv_{L_{z}}(uw(G'),z_{G'}),$$ (we do the same thing for elementary inclusion).
\item
We prove now the other part of the equivalence by proving that  $(uw(G),z_{G})\equiv_{L_{z}} (uw(G'),z_{G'} )$ implies $(G,R)\equiv (G',R')$ which is equivalent to the needed property.\\
\indent 
Suppose that $(uw(G),z_{G})\equiv_{L_{z}} (uw(G'),z_{G'} )$. There exist two $L_{z}$-isomorphic ultrapowers $(H,z)=(uw(G),z_{G})^{U}$ (respectively $(H',z')=(uw(G'),z_{G'})^{U'}$)
 of $(uw(G),z_{G})$ (resp. $(uw(G'),z_{G'}$ ). If $f$ is the isomorphism between these two structures, we have $f(z)=z'$, but the predicate $M$ is not in $L_{z}$ and may not be preserved by $f$.\\
\indent 
Define $D=\langle z_{G}\rangle^{U}\subseteq H $ 
(resp. $D'=\langle z_{G'}\rangle^{U'}\subseteq H '$), because of the definitions and properties of ultraproducts, $D$ (resp. $D'$) is a discrete subgroup of $H$ (resp. $H'$) with first element $z$ (resp. $z'$), and $D$ (resp. $D'$) is cofinal and central in $H$ (resp. $H'$).\\
\indent 
Let $B(z)$ (resp $B(z')$) be the minimal convex subgroup of $(H,\leq)$ containing $z$ (resp. of $(H',\leq ')$ containing $z'$).
The isomorphism $f$ between $(H,z,\leq)$ and  $(H',z',\leq)$, when restricted to $B(z)$ is an isomorphism between $(B(z),z,\leq)$ and $(B(z'),z',\leq)$. Now $z$ (resp. $z'$) is central and cofinal in $B(z)$ (resp. $B(z')$), and the corresponding c.o.g. are isomorphic: 
$(B(z)/\langle z\rangle, R)\approx (B(z')/\langle z'\rangle, R')$.\\
\indent 
Furthermore, $D$ and $D'$ are discrete and for each $g\in H$ (resp. $g'\in H'$) there is $h\in D$ 
(resp. $h'\in D'$) such that $g$ is between $h$ and its successor in $D$ ($g'$ is between $h'$ and its successor in $D'$). Hence 
by the generalized winding construction (\ref{gwc}) we can define a cyclic order on $H/D$ (resp. $H'/D'$) and we have $(H/D,R)\approx (B(z)/\langle z\rangle, R)$ and $(H'/D',R')\approx 
(B(z')/\langle z'\rangle, R')$. \\
\indent 
Hence we can conclude $(G,R)\equiv (G',R')$ because by (\ref{eqeluw}) $(G,R) \equiv (H/D,R)$ and $(G',R')\equiv (H'/D',R')$. \\
\indent 
All what we did can be done even when adding new symbols for all the elements in $uw(G)$ and we can prove that the elementary equivalence in this language: \\
\indent 
$(uw(G),z_G, \leq)\equiv_{L_{z}} (uw(G'),z_{G'}, \leq ')$ implies $(G,R)\equiv_{L(G)}(G',R')$ i.e. if $(uw(G),z_{G})$ is an elementary substructure of $(uw(G'),z_{G'})$ then $(G,R)$ is an elementary substructure of $(G',R')$.
\end{enumerate}
\end{proof}
This achieves the proof of Theorem \ref{treq}. 
\section{Embedding theorem of Swirczkowski and cyclic orderability.}\label{section5}
\begin{thm}\label{sw} \cite{Sw}
Let $(G,R)$ be a c.o.g., there are a linearly ordered group $(L,\leq)$ and an embedding $f$ of $(G,R)$ in the lexicographic product $\KK\overrightarrow{\times L}$. Such an embedding is called a representation of $(G,R)$.
\end{thm}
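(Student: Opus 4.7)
The plan is to use Rieger's theorem (Theorem~\ref{th21}) to translate the statement into one about totally ordered groups. Write $(G,R)\cong uw(G)/\langle z_G\rangle$ and set $F=uw(G)$, $z=z_G$. By Remark~\ref{rk23}(6) applied to $(\RR,\leq)$ and $1$, one has $uw(\KK)=\RR$, and then Remark~\ref{rk23}(7) gives $uw(\KK\overrightarrow{\times}L)=\RR\overrightarrow{\times}L$ with distinguished central cofinal element $(1,e)$. Hence $\KK\overrightarrow{\times}L$ is precisely the winding of $\RR\overrightarrow{\times}L$ along $(1,e)$, and it therefore suffices to produce a t.o.g.\ $L$ and a t.o.g.\ embedding $F\hookrightarrow\RR\overrightarrow{\times}L$ sending $z$ to $(1,e)$; passing to winding quotients on both sides then yields the desired embedding of $G$ into $\KK\overrightarrow{\times}L$.

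To build that t.o.g.\ embedding I would first isolate the convex normal subgroup
\[A_0=\{g\in F : |g|^n<z \text{ for every } n\in\NN\}\]
of $z$-infinitesimals. Since $z$ is cofinal in $F$, every positive element of $F\setminus A_0$ is Archimedean-equivalent to $z$, hence $F/A_0$ is Archimedean. H\"older's theorem then provides an order-preserving group embedding $F/A_0\hookrightarrow(\RR,+)$ which, after rescaling, sends $\overline{z}$ to $1$. This furnishes the real-valued coordinate of the sought embedding.

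For the second coordinate, the idea is to embed $F$ into a lex product placing $A_0$ and the finer convex structure beneath it strictly below the top Archimedean slice occupied by $\RR$. In the Abelian case, apply Hahn's embedding theorem to (the divisible hull of) $F$: realize $F$ inside the Hahn product of copies of $\RR$ indexed by the chain of Archimedean classes of $F$, split off the top class containing $z$, and take $L$ to be the Hahn product over the remaining classes. For general c.o.g.\ one invokes Conrad's analogue of Hahn's theorem for non-Abelian ordered groups; in each case one checks that the image of $z$ can be arranged to be $(1,e)$.

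The hard part is this second step. The short exact sequence $1\to A_0\to F\to F/A_0\to 1$ does not split in general as a sequence of \emph{ordered} groups, so one cannot merely glue a set-theoretic section to obtain an embedding into $\RR\overrightarrow{\times}A_0$. Hahn-type theorems bypass this obstruction by embedding $F$ into a strictly larger lex product rather than asking for a genuine split; the delicate point is to arrange this embedding so that the distinguished central cofinal element $z$ is sent exactly to $(1,e)$, which is what makes the winding quotients on the two sides align and the induced map on $G$ an embedding of cyclically ordered groups.
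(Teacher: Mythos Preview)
The paper does not prove Theorem~\ref{sw}; it simply cites \'Swierczkowski's original paper~\cite{Sw}. So there is no proof in the paper to compare against, and your proposal must stand on its own.

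Your reduction is sound: by Remark~\ref{rk23}(6)--(7), an embedding $G\hookrightarrow\KK\overrightarrow{\times}L$ is the same datum as an order embedding $uw(G)\hookrightarrow\RR\overrightarrow{\times}L$ taking $z_G$ to $(1,e)$, and the first coordinate is forced to be the H\"older map $\phi\colon F\to F/A_0\hookrightarrow\RR$. In the Abelian case your Hahn argument works once you note that a shear automorphism $(r,h)\mapsto(r,h-rv)$ of $\RR\overrightarrow{\times}H_0$ is order-preserving and moves the image of $z$ from $(1,v)$ to $(1,0)$.

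The gap is the non-Abelian step. There is no ``Conrad analogue of Hahn's theorem'' for arbitrary totally ordered groups that would embed a non-Abelian $F$ into a lexicographic product with $\RR$ on top; Conrad--Harvey--Holland and related results are about Abelian (lattice-)ordered groups, and a Hahn product of copies of $\RR$ is Abelian. What you actually need is much more modest than a full Hahn embedding: a group homomorphism $\psi\colon F\to L$ into some t.o.g.\ with $\psi(z)=e$ and $\psi|_{A_0}$ an order embedding. Since $z$ is central, $\psi$ factors through $G=F/\langle z\rangle$, and $A_0$ maps isomorphically onto the linear part $G_0$. Now $T(G)$ is central (Theorem~\ref{zh}), $T(G)\cap G_0=\{e\}$, and $G/T(G)$ is orderable with torsion-free Abelian quotient $K(G)/T(K(G))$; because conjugation in $G$ preserves the positive cone of $G_0$ (from $R(e,h,h^2)\Rightarrow R(e,ghg^{-1},(ghg^{-1})^2)$), one can put on $L=G/T(G)$ the lexicographic order making the image of $G_0$ convex with its given order. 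The quotient map $G\to G/T(G)$ then supplies the required $\psi$. Replacing the vague appeal to Conrad by this explicit construction closes the argument.
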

\indent 
Let $\pi_i$ ($i=1$ or $i=2$) be the $i^{th}$ projection associated with $f$, then:
\begin{enumerate}
\item 
$\pi_1\circ f$ does not depend on the representation \cite{JP}. Therefore we write $\pi_1(x)$ instead of $\pi_1(f(x))$ and $\pi_1$ instead of $\pi_1\circ f$. (When usefull we shall mention the domain: $\pi_{1,G}$). The image $\pi_1(f(G))$ will be called the {\it winding part} of $G$ and denoted by $K(G)$. We have $K(G_0) =1$.
\item
$\pi_2(f(G_0))=L\bigcap\pi_2(f(G))$.
\end{enumerate}
\indent 
As a consequence, if a c.o.g. $(G,R)$ is a substructure of $\KK$ and $f$ is a one-to-one c-isomorphism from $G$ to $\KK$, then for each $x$ we have $f(x)=x$.\\
\indent 
One can prove that if $g$ is a nontrivial c-homomorphism from $(G,R)$ into $(G',R')$ then $\ker(g)$ is c-convex iff $\pi_{1,G}=(\pi_{1,G'})\circ g$.\\
\indent 
As usual for a group $G$ its torsion part is $T(G)=\bigcup_{n\in \NN}\{g\in G/g^n=e\}$. Remark that for each prime $p$, $(G:pG)$ is $1$ or $p$, and that $T(G)\subseteq T(K(G))$.\\
\indent 
\begin{lem} \label{pur}
If a c.o.g. $(G,R)$ contains $\UU$ then each of its c-convex subgroups is pure.
\end{lem}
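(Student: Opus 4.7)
The plan is to reduce everything to the elementary fact that \emph{convex subgroups of a t.o.g.\ are pure}: if $C$ is a convex subgroup of a t.o.g.\ $T$ and $g^n\in C$, then $g\in C$, since otherwise $|g|$ would exceed every element of $C$ and then $|g^n|=|g|^n\ge|g|$ would too. Given $g\in G$ with $g^n=h\in H$, I will modify $g$ by a suitable $n$-th root of unity from $\UU\subseteq G$ so that the modified element lies in $G_0$, and then invoke this fact inside the t.o.g.\ $G_0$.

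Two preliminaries are needed. First, $\UU\subseteq Z(G)$: using Swirczkowski's embedding $G\hookrightarrow\KK\overrightarrow{\times}L$ (Theorem \ref{sw}), any torsion element of the lex product has trivial second coordinate because $L$ is torsion-free, so $\UU$ lands inside $\KK\times\{e\}$, which is central. Second, for a proper c-convex $H$ one has $H\subseteq G_0$ (the largest proper c-convex subgroup), and $\ker(\pi_1)\subseteq G_0$ as well: the map $\pi_1\colon G\to\KK$ trivially satisfies $\pi_{1,G}=\pi_{1,\KK}\circ\pi_1$, so by the criterion on kernels of c-homomorphisms recalled after Theorem \ref{sw}, $\ker(\pi_1)$ is c-convex and hence contained in $G_0$. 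Combined with $K(G_0)=1$, this forces $\pi_1(H)=\{1\}$.

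For the main step let $h\in H$ and $g\in G$ with $g^n=h$; the case $H=G$ is trivial, so assume $H$ is proper. From $\pi_1(h)=1$ we get $\pi_1(g)^n=1$ in $\KK$, so $\pi_1(g)=\pi_1(\zeta)^j$ for some $0\le j<n$, where $\zeta=e^{2i\pi/n}\in\UU\subseteq G$ (note that $\pi_1$ restricted to $\UU$ is injective, sending $\zeta$ to a primitive $n$-th root of unity in $\KK$). Set $h':=g\zeta^{-j}$. Centrality of $\zeta$ gives both $(h')^n=g^n\zeta^{-nj}=h$ and $\pi_1(h')=1$, so $h'\in\ker(\pi_1)\subseteq G_0$. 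Inside the t.o.g.\ $G_0$, $h'$ is then an $n$-th root of $h\in H$ with $H$ a convex subgroup, so the opening fact forces $h'\in H$, as required. The only subtlety is that $G_0$ need not be Abelian, so the identity $(h')^n=g^n\zeta^{-nj}$ genuinely relies on $\zeta$ being central; everything else is a direct assembly of facts already established in the paper.
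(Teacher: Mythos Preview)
Your argument is correct and follows the same route as the paper's proof: both use the Swirczkowski embedding to see that $\pi_1(g)$ is an $n$-th root of unity, pick a matching root of unity from the copy of $\UU$ inside $G$, divide it out to land in $G_0$, and then use that convex subgroups of a t.o.g.\ are radically closed. The only cosmetic differences are that the paper computes directly in coordinates $(\alpha,s)\in\KK\overrightarrow{\times}L$ (so centrality of $\UU$ and the identity $\ker\pi_1=G_0$ are implicit), whereas you make both points explicit via the c-convexity criterion and the observation that torsion elements sit in $\KK\times\{e\}$.
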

\begin{proof}. Let $f$ be an embedding of $(G,R)$ into $K(G)\overrightarrow{\times} L$ and $C$ be a proper c-convex subgroup of $(G,R)$, we have $f(C)\subseteq f(G_0)=L\bigcap f(G)$. Suppose $x \in C$, $y\in G$ and $y^n=x$. Let $f(x)=(1,t)$ and $f(y)=(\alpha,s)$. Then $f(y^n)=(\alpha^n ,s^n)$ so $\alpha^n=1$. We know that $G$ contains $\UU$ so there is $u\in G$ such that $f(u)=(\alpha,e)$. Therefore $f(yu^{-1})=(1,s)\in L\bigcap f(G)$. We have $y^n=x$ so $s^n=t$, $(1,s)^n=(1,s^n)\in f(C)$ which is linear and convex so $(1,s) \in f(C)$, i.e. $yu^{-1} \in C$ and $(yu^{-1})^n=x$.
\end{proof}
\indent 
Recall that a group is said to be {\it locally cyclic} if each finitely generated subgroup is cyclic. This is equivalent to being embedded in $(\QQ, +)$ or $\UU$ (see for instance \cite{Scn}).
\begin{thm} (Zheleva\label{zh} \cite{Zh}) A group $G$ is cyclically orderable iff its periodic part $T(G)$ is central and locally cyclic and $G/T(G)$ is orderable.
\end{thm}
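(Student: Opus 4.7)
The plan is to prove the equivalence by treating each direction separately: necessity reduces directly to Swirczkowski's embedding theorem (Theorem \ref{sw}) together with a torsion analysis, while sufficiency is harder and will be built by combining Rieger's correspondence (Theorem \ref{th21}) with a central-extension lifting step.

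\emph{Necessity.} I would assume $(G,R)$ is cyclically ordered and apply Theorem \ref{sw} to embed $G\hookrightarrow \KK\overrightarrow{\times}L$ with $L$ a totally ordered group. Since $L$ is torsion free, every torsion element of $\KK\overrightarrow{\times}L$ has trivial $L$-component, and the torsion subgroup is exactly $\UU\times\{e\}$; hence $T(G)\hookrightarrow\UU$ and is locally cyclic. The underlying group of the lex product is the direct product and $\KK$ is abelian, so $\UU\times\{e\}$ is central in $\KK\overrightarrow{\times}L$, which gives $T(G)\subseteq Z(G)$. Composing the embedding with the projection $\KK\overrightarrow{\times}L\to(\KK/\UU)\times L$ has kernel exactly $T(G)$ and embeds $G/T(G)$ in a direct product of two orderable groups: $\KK/\UU\cong\RR/\QQ$ is a torsion free divisible abelian group, $L$ is orderable by hypothesis, and the lex order on a direct product of orderable groups is a group order. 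Therefore $G/T(G)$ is orderable.

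\emph{Sufficiency.} Given $T=T(G)$ central and locally cyclic and $H=G/T$ orderable, I fix an embedding $\iota:T\hookrightarrow\UU\subset\KK$ and a total order on $H$. By Rieger's theorem applied in reverse, it will suffice to produce a totally ordered group $F$ with a central cofinal positive element $z$ such that $G$ is an $L_c$-substructure of the c.o.g.\ $F/\langle z\rangle$. First I would form the pushout $\tilde G=(G\times\KK)/N$ with $N=\{(t,\iota(t)^{-1}):t\in T\}$: centrality of $T$ in $G$ and commutativity of $\KK$ make $N$ central, yielding a central extension $1\to\KK\to\tilde G\to H\to 1$ into which $G$ embeds with $G\cap\KK=T$. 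Next, I would lift this extension along the quotient $\RR\twoheadrightarrow\KK$ to a central extension $1\to\RR\to F\to H\to 1$ satisfying $F/\ZZ\cong\tilde G$, and equip $F$ with a total order in which $\RR$ is cofinal. This last point uses the classical principle that a central extension of an orderable group by an ordered abelian group is orderable. Then $z=1\in\ZZ\subset\RR\subset F$ is central and cofinal in $F$, and Lemma \ref{gwc} applied to $(F,\ZZ,z)$ endows $F/\ZZ=\tilde G$ with a cyclic order, which restricts to one on $G$.

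\emph{Main obstacle.} The hard part will be the lift of the central extension along $\RR\twoheadrightarrow\KK$: the obstruction is the image in $H^3(H,\ZZ)$ of $[\tilde G]\in H^2(H,\KK)$ under the Bockstein attached to $0\to\ZZ\to\RR\to\KK\to 0$, and since the cocycle of $\tilde G$ factors through the divisible torsion group $\UU$, this image lies in the torsion subgroup of $H^3(H,\ZZ)$. It vanishes whenever that torsion is trivial (e.g.\ for $H$ torsion free abelian, where $H^{*}(H,\ZZ)$ is an exterior algebra). Constructing the order on $F$ with $\RR$ cofinal likewise requires care, because a naive lex order putting $\RR$ primary need not respect the non trivial cocycle; the genuine construction proceeds by induction along the chain of convex subgroups of $H$ and is the technical heart of Zheleva's argument in \cite{Zh}.
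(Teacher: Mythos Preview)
Your necessity argument is essentially the paper's: embed via Swirczkowski, read off that $T(G)\hookrightarrow\UU$ is central and locally cyclic, and observe that $G/T(G)$ embeds in $(\KK/\UU)\times L$, which is torsion-free abelian times orderable, hence orderable.

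For sufficiency, however, you have taken a long detour around a door that is already open. The paper does not pass through Rieger's unwinding, pushouts, or cohomological lifting at all. Instead it invokes Lemma~\ref{ce} (stated immediately after this theorem): if $C$ is an Abelian c.o.g., $L$ is a totally ordered group, and $G$ is a central extension of $C$ by $L$, then $G$ carries an explicit lexicographic cyclic order, defined using any set-theoretic section $L\to G$ and putting the $C$-coordinate first. With $C=T(G)$ (cyclically ordered via $T(G)\hookrightarrow\UU$) and $L=G/T(G)$ (totally ordered by hypothesis), this gives the cyclic order on $G$ in one stroke. The verification that this relation is compatible with the group law is a direct computation with the factor set; no obstruction theory is needed because one never tries to realize $G$ as a quotient of a totally ordered group.

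Your route, by contrast, has real gaps that you yourself flag. The Bockstein obstruction in $H^3(H,\ZZ)$ to lifting $1\to\KK\to\tilde G\to H\to1$ along $\RR\twoheadrightarrow\KK$ is indeed torsion, but torsion is not zero: $H=G/T(G)$ is torsion-free (being orderable) yet need not be abelian, and torsion in $H^3$ of torsion-free groups certainly occurs. And even granting the lift, producing a total order on $F$ with $\RR$ convex and cofinal is a second nontrivial step you defer to \cite{Zh}. Both difficulties evaporate once you notice that a cyclic order can be placed on the central extension directly, with the c.o.g.\ kernel leading lexicographically; that is the content of Lemma~\ref{ce}, and it is what you should use.
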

\begin{proof}. We give a detailed proof.
\begin{enumerate}
\item If $G$ is cyclically ordered then $G$ can be embedded in $\KK\times L$ where $L$ is a linearly ordered group. Hence $T(G)$ is central. It is locally cyclic because it is a subgroup of $T(\KK)=\UU$. Now $G/(\UU\cap G)$ embeds in $(\KK\times L)/(\UU \times \{e\})=(\KK/\UU)\times L$ which is linearly orderable because it is abelian without nonzero periodic element.
\item
If $T(G$) is locally cyclic it can be embedded in $\UU$, so it is cyclically orderable. If $T(G)$ is central, then $G$ is a central extension of $T(G)$ by $G/T(G)$ and $G$ can be cyclically ordered by the lexicographic order defined in the following lemma.
\end{enumerate}
\end{proof}
\indent 
(Remark that, when applied to the Abelian case, this result is related to the result of G. Sabbagh \cite{Sa} giving the same characterization for the Abelian groups which can be embedded in the multiplicative group of a field).
\begin{lem}\label{ce}
If $C$ is an Abelian c.o.g., $L$ is a linearly ordered group and $G$ is a central extension of $C$ by $L$, then $G$ can be cyclically ordered by the following cyclic order $R$:\\
\indent 
Let $\{g_r, r \in L\}$ be a family of representatives of $L$ in $G$. Each element of $G$ is represented by a pair $(c,g_r)$ with $c\in C$ and $r\in L$. We define:\\
\indent 
$R((c,g_r),(c',g_{r'}),(c'',g_{r''}))$ iff $R(c,c',c'')$ or $(c=c' \neq c''$ and $\overline{g_r}<\overline{g_{r'}})$ or $(c\neq c'=c''$ and $\overline{g_{r'}}<\overline{g_{r''} )}$ or $(c=c''\neq c'$ and $\overline{g_r}>\overline{g_{r''}})$ or $(c=c'=c''$ and $(\overline{g_{r}} <\overline{g_{r'}}<\overline{g_{r''}}$ or $\overline{g_{r'}} <\overline{g_{r''}}<\overline{g_{r}}$ or $\overline{g_{r''}} <\overline{g_{r}}<\overline{g_{r'}})$.
\end{lem}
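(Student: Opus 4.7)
The plan is to verify in turn each of the five axioms $R_1$--$R_5$ defining a cyclically ordered group. Centrality of $C$ in $G$ ensures that every element of $G$ has a unique expression $c \cdot g_r = g_r \cdot c$ with $c \in C$ and $r \in L$, so the relation $R$ is a well-defined ternary relation on the underlying set of $G$.

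Axioms $R_1$--$R_4$ concern $R$ purely as a ternary relation. The key observation is that the definition of $R$ is exactly the lexicographic cyclic order on pairs $(c, r) \in C \times L$, coming from the lex-product $C \overrightarrow{\times} L$ introduced in Section \ref{subsec13}, transported to $G$ via the bijection $G \leftrightarrow C \times L$ determined by the representatives. Since $C \overrightarrow{\times} L$ is itself a c.o.g., axioms $R_1$ (strictness) and $R_3$ (cyclicity) follow by direct inspection of the five alternatives in the definition, and $R_2$ (totality) and $R_4$ (transitivity) reduce, by case analysis on the pattern of equalities among $c, c', c''$, to the corresponding properties of $R_C$ on $C$ and of $\leq$ on $L$.

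The substantive step is axiom $R_5$, compatibility with the group law. By symmetry it suffices to treat left and right multiplication separately. For left multiplication by $u = c_0 g_{r_0}$, centrality of $C$ yields
\[
u \cdot (c g_r) = c_0 \cdot c \cdot (g_{r_0} g_r) = \bigl(c_0 \cdot c \cdot \phi(r_0, r)\bigr) g_{r_0 r},
\]
where the cocycle $\phi(r_0, r) \in C$ is defined by $g_{r_0} g_r = \phi(r_0, r) g_{r_0 r}$. I would then examine each of the five alternatives in the definition of $R$ and show that left multiplication by $u$ sends it to a true instance of one of these alternatives on the translated triple: for the alternative $R_C(c, c', c'')$ one invokes compatibility of $R_C$ with the group law of $C$; for the alternatives involving equalities between two $C$-components, one checks that the common shifts by $c_0$ are absorbed; for the alternative $c = c' = c''$, one uses that left translation by $r_0$ preserves the linear order on $L$. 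Right compatibility is handled symmetrically.

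The principal obstacle is controlling the cocycle $\phi(r_0, \cdot)$: the $C$-components of the three images are shifted by the three potentially different quantities $\phi(r_0, r), \phi(r_0, r'), \phi(r_0, r'')$, so equalities of $C$-components are not \emph{a priori} preserved and the relation $R_C$ does not \emph{a priori} transfer. Overcoming this requires exploiting the lex priority of the $C$-comparison together with the fact that in the borderline cases the relevant $L$-components coincide (or are ordered in a constrained way), which forces the cocycle contributions to cancel in the comparisons that matter; centrality of $C$ is essential throughout. Once $R_5$ is verified in this manner, the lemma is proved.
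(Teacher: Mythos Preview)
Your approach mirrors the paper's: $R_1$--$R_4$ come for free from the lexicographic product $C\overrightarrow{\times}L$, and the content is $R_5$, handled by computing $(c''',g_s)\cdot(c,g_r)$ and running through the five clauses. The paper writes this product as $(c'''c,\,g_{sr}\,m_{s,r})$ and then checks the clauses using $c'''c$ as the new $C$-coordinate --- that is, it leaves the factor $m_{s,r}$ attached to the representative part rather than absorbing it into $C$. Since the relation $R$ is defined via the \emph{unique} decomposition through the chosen $g_r$, this is precisely the cocycle obstacle you flag, and the paper simply does not address it.

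Your proposed resolution, however --- that lex priority ``forces the cocycle contributions to cancel in the comparisons that matter'' --- is not correct, and this is a genuine gap in both arguments. Take $G=\ZZ/3\ZZ\times\ZZ$, $C=\ZZ/3\ZZ\times\{0\}$, $L=\ZZ$, and representatives $g_r=(\mu(r),r)$ with $\mu(1)=1$ and $\mu(r)=0$ otherwise, so that the $C$-component of $(a,r)\in G$ is $a-\mu(r)$. The triple $(0,0),(0,2),(0,1)$ has $C$-components $0,0,2$ and satisfies $R$ via the clause $c=c'\neq c''$, $0<2$. After translating by $(0,1)$ one gets $(0,1),(0,3),(0,2)$ with $C$-components $2,0,0$; the only applicable clause is $c'=c''\neq c$, which would need $3<2$, so $R$ fails on the translate and $R_5$ is violated. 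Thus for an arbitrary system of representatives the lemma is false as stated. What the application to Theorem~\ref{zh} actually requires is only that \emph{some} compatible cyclic order on $G$ exists; one way to obtain it is to embed $C$ into a divisible Abelian c.o.g.\ so that the resulting extension splits and the cocycle disappears, but neither your sketch nor the paper's proof supplies that step.
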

\begin{proof}.
We have to verify that this cyclic order is compatible with the group law in $G$. Let $(m_{r,s}, r,s\in L)$ be 
the factor system associated to the family of representatives $(g_r, r\in L)$ and if $R((c,g_r),(c',g_{r'}),(c'',g_{r''}))$ and $(c''',g_s)\in G$:\\
\indent 
$(c''',g_s)\cdot(c,g_r)=(c'''\cdot c ,g_{s\cdot r}\cdot m_{s,r})$\\
\indent 
$(c''',g_s)\cdot (c,g_{r'})=(c'''\cdot c ,g_{s\cdot r'}\cdot m_{s,r'})$\\
\indent 
$(c''',g_s)\cdot (c,g_{r''})=(c'''\cdot c ,g_{s\cdot r''}\cdot m_{s,r''})$.\\
\indent 
Remark that the factors $m_{u,v}$ are in $C$ and that in $G/C=L$ we have 
$\overline{g_u}\cdot \overline{g_v}=\overline{g_{u\cdot v}}$.\\
\indent 
If $(c=c'\neq c'' \mbox{ and } R(c,c',c''))$ then $(c'''\cdot c=c'''\cdot c'\neq c'''\cdot c'' 
\mbox{ and } R(c'''\cdot c,c'''\cdot c',c'''\cdot c''))$.\\
\indent 
If $(c=c'\neq c'' \mbox{ and } \overline{g_{r}}<\overline{g_{r'}})$ then $(c'''\cdot c=c'''\cdot 
c'\neq c'''\cdot c'' \mbox{ and } \overline{g_{s\cdot r}}<\overline{g_{s\cdot r'}})$.\\
\indent
If $(c=c''\neq c' \mbox{ and } \overline{g_{r'}}<\overline{g_{r''}})$ then $c'''\cdot c'=c'''\cdot 
c''\neq c'''\cdot c' \mbox{ and } \overline{g_{s\cdot r'}}<\overline{g_{s\cdot r''}})$.\\
\indent 
If $(c=c''\neq c' \mbox{ and } \overline{g_{r}}>\overline{g_{r''}})$ then $(c'''\cdot c=c'''\cdot 
c''\neq c'''\cdot c' \mbox{ and } \overline{g_{s\cdot r}}>\overline{g_{s\cdot r''}})$.\\
\indent 
If $(c=c'= c'' \mbox{ and } \overline{g_{r}}<\overline{g_{r'}}<\overline{g_{r''}})$ then  $\overline{g_{s\cdot r}}<\overline{g_{s\cdot r'}}<\overline{g_{s\cdot r''}}$.\\
\indent  
Hence $R((c''',g_s)\cdot (c,g_r),(c'''g_s)\cdot (c',g_{r'}),(c''',g_s)\cdot (c'',g_{r''}))$.\\
\indent 
In the same way we could obtain\\
\indent 
$R((c,g_r)\cdot (c''',g_s),(c',g_{r'})\cdot (c'''g_s),(c'',g_{r''})\cdot (c''',g_s))$.
\end{proof}
\indent 
Recall the following.
\begin{thm} (Kokorim and Kopitov \label{kk} \cite {KK}). A group $G$ is orderable iff its center $Z(G)$ and the factor group $G/Z(G)$ are orderable.
\end{thm}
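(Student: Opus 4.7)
The plan is to prove the two directions separately, with the backward direction handled by a lexicographic construction on a central extension and the forward direction resting on an isolation property of $Z(G)$ in any ordered group.

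For the backward direction, fix positive cones $P_Z \subseteq Z(G)$ and $\overline{P} \subseteq G/Z(G)$ witnessing the two given orders. Choose a set-theoretic section $s \colon G/Z(G) \to G$ of the quotient map, so that every $g \in G$ factors uniquely as $g = s(\overline{g}) \cdot z$ with $z \in Z(G)$. Declare $g \in P$ iff either $\overline{g} \in \overline{P}$, or $\overline{g} = \overline{e}$ and $z \in P_Z$. The verifications that $P \cdot P \subseteq P$, that $P$ and $P^{-1}$ together partition $G \setminus \{e\}$, and that $gPg^{-1} \subseteq P$ for every $g$, all reduce to the analogous properties of $P_Z$ and $\overline{P}$: centrality of $Z(G)$ absorbs the factor system, exactly as in the cyclic analogue of Lemma \ref{ce}, but now for linear orders.

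For the forward direction, suppose $G$ carries an order with positive cone $P$. Restriction gives an order on $Z(G)$ via $P \cap Z(G)$. The substantive task is producing an order on $G/Z(G)$. The preliminary fact we need is that orderable groups admit unique root extraction: monotonicity of the group law yields $a > b \geq e \Rightarrow a^n > b^n$, and a sign analysis handles the remaining cases, so $a^n = b^n$ forces $a = b$. Applied to the identity $(h^{-1}gh)^n = h^{-1}g^n h = g^n$, which is valid whenever $g^n \in Z(G)$, this gives $h^{-1}gh = g$ for every $h \in G$, whence $g \in Z(G)$. Consequently $Z(G)$ is \emph{isolated} in $G$, and in particular the quotient $G/Z(G)$ is torsion-free.

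The remaining obstacle---upgrading this torsion-freeness to genuine orderability of $G/Z(G)$---is the main difficulty, since in general $Z(G)$ need not be convex in $G$ under the given order. The tactic is to replace the original order by one in which $Z(G)$ becomes convex: invoke a Hahn-type representation of $G$ as a subgroup of a lexicographically ordered product and rearrange the convex components so that $Z(G)$ sits as a convex block. The isolation of $Z(G)$ established above is precisely what allows this rearrangement (an isolated subgroup cannot be interleaved with elements outside it in an incompatible way). Once $Z(G)$ is convex in $G$, the set $\overline{P} := \{\overline{g} \neq \overline{e} : gZ(G) \subseteq P\}$ is a well-defined, multiplicatively closed, conjugation-invariant positive cone on $G/Z(G)$, furnishing the required order.
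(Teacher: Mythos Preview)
The paper does not prove this theorem at all: it is stated with attribution to Kokorin and Kopitov \cite{KK} and immediately used as input to the cyclic analogue (Theorem~\ref{kkk}). So there is no ``paper's own proof'' to compare against, and your proposal must stand on its own.

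Your backward direction is fine and is indeed the linear analogue of Lemma~\ref{ce}: lexicographically ordering a central extension of an orderable group by an orderable group works exactly as you describe.

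Your forward direction has a genuine gap. The isolation of $Z(G)$ is correctly established (unique roots in ordered groups, applied to $(h^{-1}gh)^n=g^n$), and from it you correctly deduce that $G/Z(G)$ is torsion-free. But the step where you ``invoke a Hahn-type representation of $G$ \dots\ and rearrange the convex components so that $Z(G)$ sits as a convex block'' is not an argument. Hahn's embedding theorem is a theorem about \emph{abelian} ordered groups; there is no analogous decomposition of a general ordered group into rearrangeable lexicographic factors. Moreover, the assertion that an isolated normal subgroup can always be made convex under \emph{some} order on $G$ is precisely equivalent to the orderability of $G/Z(G)$ (since a convex normal subgroup yields an ordered quotient, and conversely an order on $G/Z(G)$ together with one on $Z(G)$ gives a lexicographic order on $G$ with $Z(G)$ convex). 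So your ``rearrangement'' step assumes what is to be proved. The sentence ``an isolated subgroup cannot be interleaved with elements outside it in an incompatible way'' is suggestive but does not supply the missing mechanism.

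The actual argument in \cite{KK} does not proceed via any Hahn-type embedding; it uses the semigroup/positive-cone characterisation of orderability (in the spirit of the \L o\'s--Ohnishi criterion recalled just after Theorem~\ref{kk} in the paper) together with the isolation of $Z(G)$ to build a compatible positive cone on $G/Z(G)$ directly. If you want to complete your proof, that is the direction to pursue: show that for any finite family $x_1,\dots,x_n\in G\setminus Z(G)$ one can choose signs so that no product of conjugates of the $x_i^{\epsilon_i}$ lands in $Z(G)$, using isolation to control what happens when such a product becomes central.
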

\indent 
We can obtain a similar result for cyclically ordered groups.
\begin{thm}\label{kkk} 
A group $G$ is cyclically orderable iff its center $Z(G)$ is cyclically orderable and the factor group $G/Z(G)$ is orderable.
\end{thm}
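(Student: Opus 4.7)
The plan is to reduce both directions of the equivalence to tools already established in the paper. The forward implication will use Rieger's theorem (\ref{th21}) together with Lemma \ref{uwcenter} to lift the problem to the linearly ordered group $uw(G)$, where the classical Kokorin-Kopitov theorem (\ref{kk}) applies; the reverse implication is a direct application of the central-extension lemma \ref{ce}.

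For ($\Rightarrow$), suppose $(G,R)$ is a cyclically ordered group. The subgroup $Z(G)$ inherits the cyclic order $R$ and is therefore cyclically orderable. To handle the quotient, I would pass to the Rieger unwound $(uw(G),\leq_R)$, a linearly ordered group containing the central cofinal element $z_G$. Lemma \ref{uwcenter} gives $Z(uw(G))=uw(Z(G))$, and this subgroup obviously contains $\langle z_G\rangle$; the third isomorphism theorem then identifies
\[
G/Z(G)\;\cong\;\bigl(uw(G)/\langle z_G\rangle\bigr)\big/\bigl(Z(uw(G))/\langle z_G\rangle\bigr)\;\cong\;uw(G)/Z(uw(G)).
\]
Applying Kokorin-Kopitov (\ref{kk}) to the orderable group $uw(G)$ shows that $uw(G)/Z(uw(G))$ is orderable, hence so is $G/Z(G)$.

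For ($\Leftarrow$), note that $Z(G)$ is automatically abelian. Assuming it carries a compatible cyclic order and that $G/Z(G)$ carries a compatible linear order, $G$ is a central extension of the abelian c.o.g.\ $Z(G)$ by the linearly orderable group $G/Z(G)$. Lemma \ref{ce} then furnishes a cyclic order on $G$ built lexicographically from the chosen orders, so $G$ is cyclically orderable.

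There is no genuine obstacle: the argument is essentially a bookkeeping combination of three ingredients already in hand, namely (a) the identification $Z(uw(G))=uw(Z(G))$, (b) Kokorin-Kopitov in the linear case, and (c) the lexicographic construction of Lemma \ref{ce}. The only mildly delicate point is checking that the isomorphism $G/Z(G)\cong uw(G)/Z(uw(G))$ is well defined, which is automatic once one observes that $\langle z_G\rangle\subseteq Z(uw(G))$.
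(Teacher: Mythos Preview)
Your proposal is correct and follows essentially the same route as the paper: for the forward direction you pass to $uw(G)$, use Lemma \ref{uwcenter} together with the third isomorphism theorem to identify $G/Z(G)\cong uw(G)/Z(uw(G))$, and then invoke Kokorin--Kopitov (Theorem \ref{kk}); for the converse you apply Lemma \ref{ce}. This is exactly the paper's argument, and your additional remark that $Z(G)$ is cyclically orderable simply by restricting $R$ fills a small step the paper leaves implicit.
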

\begin{proof}.
\begin{enumerate}
\item
If $Z(G)$ is cyclically orderable and $G/Z(G)$ is orderable, then $G$ is a central extension of $G/Z(G)$ by $Z(G)$ and then can be cyclically ordered using Lemma \ref{ce}.
\item
Let $G$ be cyclically ordered, $uw(G)$ is totally ordered and by Theorem \ref{kk} $uw(G)/Z(uw(G))$ is orderable. The subgroup $\langle z_{G}\rangle$ is normal in uw(G) (and contained in $Z(uw(G))$), hence 
$uw(G)/Z(uw(G))\approx (uw(G)/\langle z_{G}\rangle)/(Z(uw(G))/\langle z_{G}\rangle)$. 
We have by Lemma \ref{uwcenter} $Z(uw(G))/\langle z_{G}\rangle\approx Z(G)$ hence $uw(G)/Z(uw(G))\approx G/Z(G)$ and $G/Z(G)$ is orderable.
\end{enumerate}
\end{proof}
\indent 
Finally we give, in the language of groups, a system of axioms for cyclic orderability. First recall the characterization of orderable groups given by Onishi and Los.
\begin{thm} (Onishi, Los \cite{O}, \cite{L}, see also \cite {KK} ch. 2 th. 3)
A group $G$ is orderable iff for any finite set of non identity elements $x_1, \dots ,x_n$ there is an $\epsilon=(\epsilon_1,\dots, \epsilon_n) \in \{1,-1\}^n$ such that $e$ does not belong to the semigroup generated by the conjugates of $x_1^{\epsilon _1},\dots ,x_n^{\epsilon_n}$. Furthermore the class of orderable groups is axiomatizable by the following family $(O_n \mbox{ } n\in \NN)$ of formulas where $O_n$ is:
$\forall x_1,\dots ,x_n((\bigwedge \bigwedge_{i\in n} x_i \neq e )$ \\
\indent 
$ \Rightarrow \bigvee \bigvee_{(\epsilon_1,\dots ,\epsilon_n)\in \{1,-1\}^n} \bigwedge \bigwedge_{k\in n,i_1,\dots ,i_k \in n} \forall y_{i_1}, \dots,y_{i_n}\; e\neq  \Pi_{j=1, \dots,k} (y_{i_j})^{1}\cdot x_{i_j}^{\epsilon_{i_j} } y_{i_j})$.
\end{thm}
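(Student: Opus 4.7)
The plan is to prove the characterization in two directions and then observe that the axiomatization follows for free. The easy direction (orderability implies the Onishi--\L o\'s condition) goes as follows: fix a compatible total order on $G$ and let $x_1,\ldots,x_n \neq e$. Choose $\epsilon_i = 1$ if $e < x_i$ and $\epsilon_i = -1$ otherwise, so that each $x_i^{\epsilon_i}$ is positive. Two-sided invariance of the order forces every conjugate $y^{-1} x_i^{\epsilon_i} y$ to be positive, and any product of positive elements is again positive, hence different from $e$. This rules out $e$ from the normal subsemigroup generated by the $x_i^{\epsilon_i}$.

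For the converse, I would construct a positive cone by a Zorn / compactness argument. Consider the family $\mathcal{F}$ of normal subsemigroups $S$ of $G$ with $e \notin S$, partially ordered by inclusion. The hypothesis says exactly that for every finite tuple $x_1, \ldots, x_n$ of non-identity elements one can pick signs so that the normal subsemigroup generated by $\{x_i^{\epsilon_i}\}$ lies in $\mathcal{F}$. Restricted to the poset of ``consistent orientations'' (subsets $P$ of $G \setminus \{e\}$ closed under multiplication and conjugation, missing $e$), this is the finite intersection property needed to feed Zorn's lemma. Let $P$ be a maximal element. Maximality forces $P \cup P^{-1} = G \setminus \{e\}$: otherwise some $x$ would admit neither orientation, but the hypothesis applied to a finite generating subset of $P$ augmented by $x$ provides at least one sign for $x$ that extends $P$ inside $\mathcal{F}$, a contradiction. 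The cone $P$ then defines a compatible total order on $G$ via $x \leq y$ iff $y x^{-1} \in P \cup \{e\}$.

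For the axiomatization part, each $O_n$ is really a first-order universal sentence, because the existential over $\epsilon \in \{1,-1\}^n$ unfolds as a finite disjunction. The equivalence just proved yields that $G$ is orderable iff $G \models O_n$ for every $n \in \mathbb{N}$: the forward implication applies the easy direction to each finite tuple, and the backward implication is precisely the finitary hypothesis that powers the Zorn argument. The main obstacle lies in setting up the Zorn step cleanly: one must choose the correct partial order so that maximality lifts from ``some orientation of each finite tuple avoids $e$'' to a global sign function on $G \setminus \{e\}$, and then verify that adjoining either $x$ or $x^{-1}$ to a maximal $P$ still produces an element of $\mathcal{F}$. This bookkeeping is routine once phrased correctly, but it is the only step that requires genuine care.
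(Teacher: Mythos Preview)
The paper does not give its own proof of this theorem; it is quoted from the literature (Ohnishi, \L o\'s, Kokorin--Kopytov) and used as a black box in the proof of Theorem~\ref{thm56}. So there is no in-paper argument to compare against. Your easy direction is correct, and your remark that each $O_n$ is equivalent to a universal sentence is also correct (a finite disjunction of universal formulas is universal).

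The hard direction, however, has a genuine gap, and it sits exactly at the step you label ``routine.'' Suppose $P$ is maximal among normal subsemigroups missing $e$ and $x\notin P\cup P^{-1}$. If neither $x$ nor $x^{-1}$ can be adjoined, finite witnesses give $F_1,F_2\subseteq P$ with $e\in\langle F_1,x\rangle^N$ and $e\in\langle F_2,x^{-1}\rangle^N$. Invoking the Onishi--\L o\'s hypothesis on $F_1\cup F_2\cup\{x\}$ produces \emph{some} sign vector, but nothing forces the signs on the elements of $F_1\cup F_2$ to be $+1$; the resulting good subsemigroup need not contain $P$, so maximality of $P$ is not contradicted. (In $\ZZ\times\ZZ/2\ZZ$ with $P=\{(n,0):n>0\}$ and $x=(0,1)$ both extensions contain $e$; this group fails the hypothesis, but the example shows the extension step is not a general fact about normal subsemigroups and must use the hypothesis more sharply than your sketch does.)

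The clean repair is to carry out the compactness argument you allude to rather than Zorn on subsemigroups: in the compact space $\{\pm1\}^{G\setminus\{e\}}$, let $C_F$ (for finite $F\subseteq G\setminus\{e\}$) be the clopen set of sign functions $\sigma$ with $e\notin\langle x^{\sigma(x)}:x\in F\rangle^N$. The hypothesis gives $C_F\neq\emptyset$, and $C_{F_1\cup F_2}\subseteq C_{F_1}\cap C_{F_2}$ yields the finite intersection property. Any $\sigma\in\bigcap_F C_F$ then produces a global positive cone $\langle x^{\sigma(x)}:x\neq e\rangle^N$, avoiding the sign-coherence problem entirely.
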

\begin{thm}\label{thm56} Let $G$ be a group, the following are equivalent:
\begin{enumerate}
\item
$G$ is cyclically orderable
\item
$T(G)$ is locally cyclic and $G/Z(G)$is orderable
\item
$G$ satisfies the following system of axioms $a_n, b_n, n\in \NN$
\begin{enumerate}
\item
$a_n: \forall x_0,\dots ,x_n((\bigwedge \bigwedge_{0<i< n} x_i^n = e \Rightarrow \bigvee \bigvee_{0<i<j<n}x_i=x_j)$ \\
\indent 
(i.e. the n-torsion part of $G$ has dimension at most one).
\item
$b_n: \forall x_1,\dots ,x_n(\bigwedge \bigwedge_{i\in n}x_i\notin Z(G))\Rightarrow \bigvee \bigvee_{(\epsilon_1,\dots ,\epsilon_n)\in \{1,-1\}^n}$$\\
$$\bigwedge \bigwedge_{k\in n,i_1,\dots ,i_k \in n} \forall y_{i_1}, \dots,y_{i_n}(\Pi_{j=1, \dots ,k}(y_{i_j})^{-1}\cdot 
x_{i_j}^{\epsilon_{i_j}}\cdot y_{i_j}) \notin Z(G)$.
\end{enumerate}
\end{enumerate}
\end{thm}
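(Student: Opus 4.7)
The plan is to handle $(1)\Leftrightarrow(2)$ by combining Zheleva's theorem \ref{zh} with Theorem \ref{kkk}, and then $(2)\Leftrightarrow(3)$ by reading each scheme as expressing its intended semantic content: the family $\{b_n\}$ axiomatizes orderability of $G/Z(G)$ via the Onishi--Los theorem cited above, while the family $\{a_n\}$ axiomatizes local cyclicity of $T(G)$ once the rest has forced $T(G)\subseteq Z(G)$.

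For $(1)\Rightarrow(2)$, Zheleva yields that $T(G)$ is locally cyclic and Theorem \ref{kkk} yields that $G/Z(G)$ is orderable. For $(2)\Rightarrow(1)$, orderability of $G/Z(G)$ makes it torsion-free, so $T(G)\subseteq Z(G)$; then $T(Z(G))=T(G)$ is central (trivially, $Z(G)$ being abelian) and locally cyclic, and $Z(G)/T(G)$ is a torsion-free abelian group, hence orderable, so Zheleva applied to $Z(G)$ shows that $Z(G)$ is cyclically orderable. Theorem \ref{kkk} then yields that $G$ itself is cyclically orderable.

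For $(2)\Leftrightarrow(3)$, I first observe that ``$x\in Z(G)$'' is first-order definable by $\forall y\; xy=yx$, so $a_n$ and $b_n$ are genuine first-order formulas in the language of groups. The key translation is that $G\models\{b_n:n\in\NN\}$ iff $G/Z(G)$ is orderable. Indeed, ``$x_i\notin Z(G)$'' in $G$ corresponds to ``$\overline{x_i}\neq e$'' in $G/Z(G)$; the conjugates $y^{-1}x_i^{\epsilon_i}y$ in $G$ map onto all conjugates in $G/Z(G)$ (since $G\to G/Z(G)$ is surjective), so the universal quantifier on the $y_{i_j}$ transports correctly; and a product of such conjugates belongs to $Z(G)$ iff its image is trivial in $G/Z(G)$. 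So $b_n$ in $G$ is exactly the axiom $O_n$ in $G/Z(G)$, and by Onishi--Los this is equivalent to orderability of $G/Z(G)$. For the axioms $a_n$, once $\{b_n\}$ holds we have $T(G)\subseteq Z(G)$ abelian, so $G[n]=\{g:g^n=e\}$ is a subgroup of $T(G)$; reading $a_n$ in accordance with the parenthetical comment as ``$|G[n]|\leq n$'', the prime case $n=p$ forces $|T(G)[p]|\leq p$, i.e.\ $T(G)[p]$ is cyclic, whence each $p$-primary component $T(G)_p$ is cyclic or of Pr\"ufer type and $T(G)$ is locally cyclic; the converse is immediate.

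The main subtlety is the translation of $b_n$ to $O_n$ in the quotient: one must verify carefully that the nested quantifier structure (existential $\epsilon$, universal $y_{i_j}$) passes through the projection, and that ``lies in $Z(G)$'' really corresponds to ``equals $e$'' in $G/Z(G)$. Everything else is a direct assembly of \ref{zh}, \ref{kkk}, and the Onishi--Los theorem, together with the standard fact that an abelian torsion group with $|T[p]|\leq p$ for every prime $p$ is locally cyclic.
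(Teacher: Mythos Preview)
Your proof is correct and follows the same route as the paper for $(1)\Leftrightarrow(2)$: the paper also reduces via Theorem~\ref{kkk} to ``$Z(G)$ cyclically orderable and $G/Z(G)$ orderable'', then applies Zheleva to $Z(G)$, noting that $T(G)=T(Z(G))$ once $G/Z(G)$ is torsion-free and that $Z(G)/T(G)$ is automatically orderable as a torsion-free abelian group. The paper's proof in fact stops there and leaves $(2)\Leftrightarrow(3)$ implicit, so your explicit verification---that the $b_n$ are the Onishi--Los axioms $O_n$ pulled back along the surjection $G\to G/Z(G)$, and that the $a_n$ (read per the parenthetical, the indices in the displayed formula being slightly off) encode local cyclicity of $T(G)$ via the prime case---supplies detail the paper omits.
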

\begin{proof}. Using Theorem \ref{kk} $G$ is cyclically orderable iff  $G/Z(G)$ is orderable and $Z(G)$ is cyclically orderable. Now  by Theorem \ref{zh} $Z(G)$ is cyclically orderable iff  $T(Z(G))$ is is central in $Z(G)$ and locally cyclic and $Z(G)/T(Z(G))$ is orderable. Remark that:\begin{enumerate}
\item
If $G/Z(G)$ is orderable then $T(G)\subseteq Z(G)$ so $T(G)=T(Z(G))$.
\item
$Z(G)/T(G)$ is orderable because it is Abelian without torsion part.
\end{enumerate}
Therefore $G$ is cyclically orderable iff $T(G)$ is locally cyclic and $G/Z(G)$ is orderable.
\end{proof}
\section{Remarks on the universal theory of Abelian cyclically ordered groups.} \label{section6}
\indent (Looking at Abelian c.o.g., the group law will be denoted additively).\\
\indent 
 Remark first that, without the Abelian hypothesis, one can prove that two c.o.g. have the same universal theory iff their unwound linearly ordered groups have the same universal theory in the language with a constant realised by $z_G$: $(G,R)\equiv_{\forall }(G',R')$ iff $(uw(G),z_G)\equiv_{\forall }(uw(G'),z_{G'})$. The proof uses the same argument that the one given for Theorem \ref{treq}. \\
\indent 
Gurevich and Kokorin (\cite{GK} in Russian, see also \cite{Gp}) proved that two linearly ordered Abelian groups satisfy the same universal formulas. For Abelian c.o.g. the existence, or not, of an element of a given torsion type gives different universal theories.
 C.o.g. without torsion part can also have different universal theories, for example the formula $\exists x R(x,2x,3x,4x,0)\wedge \neg R(x,2x,3x,4x,5x,0)$ is satisfied in the c.o.subgroup of $\KK$ generated by $e^{2i\pi \vartheta}$ with $\vartheta $ an irrational number, but it is not satisfied in the c.o.g. $H$ of Example \ref{non-A} in Section \ref{section3}. \\
\indent 
Recall that if $G$ is an Abelian linearly ordered group and $C$ is a convex subgroup of $G$ then $G$ is elementary equivalent to the lexicographical product $G/C\overrightarrow{\times} C$. Here we have an analogue:
\begin{thm} \label{lexpur}
If $(G,R)$ is an Abelian c.o.g. and $C$ a c-convex subgroup of $G$ which is pure in $G$,
then $(G,R)\equiv (G/C)\overrightarrow{\times}C$.
\end{thm}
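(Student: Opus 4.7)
The plan is to reduce to an elementary equivalence between Rieger unwounds and then invoke the classical Gurevich--Kokorin result for Abelian linearly ordered groups. By Theorem \ref{treq}, the conclusion $(G,R)\equiv (G/C)\overrightarrow{\times}C$ is equivalent to
$$\bigl(uw(G),\,z_G,\,\leq_R\bigr)\equiv\bigl(uw((G/C)\overrightarrow{\times}C),\,z_{(G/C)\overrightarrow{\times}C},\,\leq\bigr).$$
By Remark \ref{rk23}(7), the right-hand unwound equals $uw(G/C)\overrightarrow{\times}C$, and under this identification its distinguished element becomes $(z_{G/C},e)$ in the lex product. So the problem reduces to an elementary equivalence of Abelian linearly ordered groups, with an extra constant.

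Next I would identify $C$ with a convex subgroup of $uw(G)$. Since $C$ is a proper c-convex subgroup of an Abelian c.o.g., it is linear (by Jakub\'{\i}k--Pringerov\'a) and lies in $G_0$. The order-isomorphism $G_0 \to G_{0,uw}$ recalled in Section \ref{section3} (sending $x\mapsto(0,x)$ on $P$ and $x\mapsto(-1,x)$ on $P^{-1}\setminus\{e\}$) identifies $C$ with a convex subgroup $\tilde{C}$ of $uw(G)$. The canonical projection $G\to G/C$ lifts, via the $\ZZ\times(\cdot)$ description of $uw$, to a surjective ordered group homomorphism $uw(G)\to uw(G/C)$ with kernel exactly $\tilde{C}$ and carrying $z_G$ to $z_{G/C}$. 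Thus $uw(G)/\tilde{C}\cong uw(G/C)$ as ordered groups with distinguished element, and purity of $C$ in $G$ transfers to purity of $\tilde{C}$ in $uw(G)$.

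Now I apply the classical elementary equivalence recalled just before the statement: for any Abelian linearly ordered group $L$ and convex subgroup $N$, one has $L\equiv L/N\overrightarrow{\times}N$. Taking $L=uw(G)$ and $N=\tilde{C}$ yields
$$uw(G)\equiv (uw(G)/\tilde{C})\,\overrightarrow{\times}\,\tilde{C}\cong uw(G/C)\overrightarrow{\times}C$$
as Abelian linearly ordered groups. The final step is to upgrade this to an elementary equivalence in the language enriched with the constant $z_G$, matched on the right with $(z_{G/C},e)$. The key structural observations are that $\langle z_G\rangle\cap\tilde{C}=\{e\}$ and $\tilde{C}$ lies in the infinitesimal layer below $z_G$, so that a back-and-forth variant of the Gurevich--Kokorin argument should carry $z_G$ to its canonical image $(z_{G/C},e)$ in the lex product.

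The main obstacle is precisely this last step: the classical lex-product elementary equivalence must be refined to respect the distinguished cofinal central element $z_G$. I expect purity of $\tilde{C}$ in $uw(G)$, inherited from purity of $C$ in $G$, to be essential here, since it guarantees that $z_G$ admits a decomposition compatible with the lex-splitting (without purity, $z_G$ could have a nontrivial $\tilde{C}$-component forced by roots coming from outside $\tilde{C}$). Everything else is assembling the reductions listed above and invoking Theorem \ref{treq} in the reverse direction to descend the equivalence from the unwounds back to the cyclically ordered groups.
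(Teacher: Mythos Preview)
Your reduction via Theorem~\ref{treq} and Remark~\ref{rk23}(7) is sound, and so is the identification of $C$ with a convex subgroup $\tilde C$ of $uw(G)$ with $uw(G)/\tilde C\cong uw(G/C)$ as ordered groups carrying $z_G$ to $z_{G/C}$. The genuine gap is exactly where you flag it, but your proposed mechanism for closing it is wrong. A convex subgroup of an Abelian linearly ordered group is \emph{automatically} pure (indeed isolated: $n x\in\tilde C$ forces $x\in\tilde C$), so ``purity of $\tilde C$ in $uw(G)$, inherited from purity of $C$ in $G$'' carries no information and cannot be what saves the constant step. To see that the constant step really fails without the hypothesis, run your argument on the paper's own non-pure example (Remark~3.7(2)): $H=\langle(\bar 1,1)\rangle\subset(\ZZ/3\ZZ)\overrightarrow{\times}\ZZ$ with $C=\langle(0,3)\rangle$. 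One computes $uw(H)=\{(a,b)\in\ZZ\overrightarrow{\times}\ZZ: a\equiv b\pmod 3\}$ with $z_H=(3,0)$, while $uw(H/C)\overrightarrow{\times}C\cong\ZZ\overrightarrow{\times}\ZZ$ with distinguished element $(3,0)$. In the second structure $(3,0)$ is divisible by $3$; in the first it is not. So $(uw(H),z_H)\not\equiv(uw(H/C)\overrightarrow{\times}C,(z_{H/C},0))$, although $uw(H)\equiv uw(H/C)\overrightarrow{\times}C$ without the constant. Purity of $C$ in $G$ governs precisely this divisibility behaviour of $z_G$ modulo $\tilde C$, and you have not shown how to exploit it.

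The paper avoids this difficulty by a different route: it passes to an $\omega_1$-saturated elementary extension $(G_1,C_1)$ of the pair $(G,C)$ and invokes the Eklof--Fisher theorem, which says that a pure subgroup of a saturated Abelian group is a direct summand. This yields a group isomorphism $G_1\cong H\times C_1$, and c-convexity of $C_1$ then forces the cyclic order on $G_1$ to be the lexicographic one, giving an actual c.o.g.\ isomorphism $G_1\cong(G_1/C_1)\overrightarrow{\times}C_1$. From an isomorphism the constant $z_{G_1}$ transfers for free, and one finishes with Feferman--Vaught on the unwound factors. In short: purity is used not to massage a pre-existing elementary equivalence, but to produce a splitting in a saturated model; that is the missing idea in your outline.
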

\begin{proof}. Let $ L_2$ be the language obtained by adding to the language of c.o.g. a
predicate P which we interpret by $C$. Let $(G_1, C_1)$ be an $\omega _1$-saturated model
of the theory of $(G,C)$, then by a result of Eklof and Fisher \cite{EF}, when
considering the group structure we have $G_1\simeq H\times C_1$, for some subgroup $H$ of 
$G_1$. $C$ is linear and c-convex in
$G$ so $C_1$ is linear and c-convex in $G_1$. 
Now we verify that the cyclic order of $G_1$ coincides with the lexicographic cyclic 
order $R_{\times}$ on $H \overrightarrow{\times} C_1$ defined by: for $a$, $a'$ in $H$ and $b$, $b'$ 
in $C_1$, 
$R_{\times}(0,a+b,a'+b')$ iff
($R(0,a,a')$ or ($a=0\neq a'$ and $0<b$)
or ($0\neq a=a'$ and $b<b'$)
or ($a\neq 0=a'$ and $b'<0$)
or ($a=a'=0$ and $R(0,b,b')$)). It is clear that $R_{\times}$ and $R$ coincide on $H$ and on
$C_1$. Moreover, because $C_1$ is c-convex, we have when $a\neq 0$ and $a'\neq 0$: 
$R(0,b,a'+b')$ iff $b>0$, $ R(0,a+b,b')$ iff $b<0$, and $R(0,a+b,a+b')$ iff $
R(0,b'-b,-a-b)$ iff $b'-b>0$. We have $G_1/C_1\equiv G/C$, $C_1\equiv C$.
We saw (Remark \ref{rk23}) that for each  $G$ and $H$
$uw(G \overrightarrow{\times} H)=uw(G) \overrightarrow{\times} H$.
We also have (\cite{FV}) that for t.o.g. the lexicographical product preserves the
elementary equivalence, so passing through the unwounds we obtain:
$(uw(G),z_G)\equiv (uw(G_1),z_{G_1})\equiv (uw(G_1/C_1)\overrightarrow{\times}  
C_1,(z_{G_1}+C_1,0)) \equiv
(uw(G/C)\overrightarrow{\times} C,(z_G+C,0)) \equiv (uw(G/C\overrightarrow{\times} C),(z_G+C,0))$,
Hence $G\equiv G/C\overrightarrow{\times} C$.
\end{proof}
\begin{thm}\label{thm63} 
Let $G$ and $G'$ be two Abelian c.o.g.. Then $G$ and $G'$ satisfy the same universal formulas if and only if their torsion subgroups are isomorphic and, 
either $K(G)$ and $K(G')$ are finite and isomorphic,
or $K(G)$ and $K(G')$ are infinite. 
\end{thm}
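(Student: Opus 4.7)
My plan is to read off both invariants from specific existential sentences for the $(\Rightarrow)$ direction, and, for $(\Leftarrow)$, to reduce via Rieger's unwinding to a universal-equivalence question about abelian totally ordered groups with a distinguished positive cofinal constant.

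\emph{Necessity.} Two families of existential $L_c$-sentences will suffice. First, for each prime power $p^n$, the formula $\exists x\,(p^n x = 0 \wedge p^{n-1}x\neq 0)$ asserts the existence in $G$ of an element of order exactly $p^n$; since $T(G)\subseteq T(K(G))\subseteq \UU$ is locally cyclic, its isomorphism type is determined by the indices $\nu_p(G):=\sup\{k:\ZZ/p^k\ZZ\hookrightarrow T(G)\}\in\NN\cup\{\infty\}$, so $\equiv_\forall$ forces $T(G)\cong T(G')$. Second, for each $n\geq 1$, set
\[ \phi_n := \exists x\, R(0,x,2x,\dots,nx), \]
where $R(y_0,\dots,y_n)$ abbreviates the conjunction of all cyclic-order triples. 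Using a Swirczkowski embedding $G\hookrightarrow\KK\overrightarrow{\times}L$, if $x$ witnesses $\phi_n$ then the angles $k\theta(\pi_1(x))$ must lie strictly increasing in $[0,2\pi)$, so $\theta(\pi_1(x))<2\pi/n$, excluding $|K(G)|\leq n$. Conversely any $\zeta\in K(G)$ of angle below $2\pi/n$ lifts (with $\pi_2$-part $0$ or small positive) to a witness of $\phi_n$. Therefore $\phi_n\Leftrightarrow |K(G)|>n$, and existential preservation under $\equiv_\forall$ yields the desired dichotomy on $K(G),K(G')$.

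\emph{Sufficiency.} By the remark at the start of this section (established by the same argument as Theorem \ref{treq}), $G\equiv_\forall G'$ iff $(uw(G),z_G)\equiv_\forall(uw(G'),z_{G'})$ in the language of ordered abelian groups with a named constant. I then establish the following: for an abelian t.o.g.\ $(F,z)$ with $z$ positive, central, and cofinal, $\equiv_\forall$ in this language is controlled by the two existential families
\[ \{n : F\models \exists y\,(ny=z)\} \quad\text{and}\quad \{n : F\models \exists y\,(0<y\wedge ny<z)\}. \]
Both families translate to our c.o.g.\ invariants. The canonical lift $y\in[0,z_G)$ of an order-$n$ element of $G$ satisfies $ny = z_G$ in $uw(G)$, and conversely every $n$-th divisor of $z_G$ in the fundamental interval projects to an order-$n$ element of $G$, so the first family matches $T(G)\cong T(G')$. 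The second family, after projection, is precisely $\phi_n$, so it matches the finite/infinite dichotomy on $K$. Hence $(uw(G),z_G)\equiv_\forall(uw(G'),z_{G'})$, and $G\equiv_\forall G'$.

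\emph{Main obstacle.} The principal technical point is the claim that $\equiv_\forall$ for an abelian t.o.g.\ with a named cofinal positive constant is controlled by the divisibility and magnitude families above. Gurevich--Kokorin's theorem supplies universal equivalence of the underlying abelian t.o.g.; the constant is then accommodated either by a back-and-forth inside the divisible ordered hull $\QQ\otimes F$, matching the $\QQ$-affine type of $z$ within its Archimedean class, or by a quantifier-elimination argument showing that every existential $L_z$-formula is, modulo the universal theory of abelian t.o.g., a boolean combination of the above two families. A subsidiary point is the bijection between $\phi_n$-witnesses in $G$ and magnitude-witnesses in $uw(G)$, which uses the canonical lift in $[0,z_G)$ and the density of infinite subgroups of $\KK\cong\RR/\ZZ$.
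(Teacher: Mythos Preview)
Your argument has a genuine gap that breaks both directions, and it stems from a single mistake: the formula $\phi_n:=\exists x\,R(0,x,2x,\dots,nx)$ does \emph{not} detect the size of $K(G)$.

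Take $G=\ZZ/3\ZZ\overrightarrow{\times}\ZZ$. Here $K(G)=\ZZ/3\ZZ$ has only three elements, yet every $\phi_n$ holds: choose $x=(0,1)\in G_0$, and then $R(0,x,2x,\dots,nx)$ is satisfied inside the linear part for all $n$. Your claim that ``if $x$ witnesses $\phi_n$ then the angles $k\theta(\pi_1(x))$ must lie strictly increasing'' ignores the case $\pi_1(x)=1$, where the cyclic chain is carried entirely by the $L$-coordinate. The paper's correct formula is $\psi_n(x):=R(0,x,\dots,nx)\wedge\neg R(0,x,\dots,(n+1)x)$; the extra clause kills witnesses from $G_0$, and one checks that $\exists x\,\psi_n(x)$ holds for arbitrarily large $n$ exactly when $K(G)$ is infinite.

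The same error invalidates your sufficiency reduction. Translated through the canonical lift, $\phi_n$ in $G$ becomes $\exists y\,(0<ny<z_G)$ in $uw(G)$, while $\psi_n$ becomes $\exists y\,(0<ny<z_G\leq(n+1)y)$. Your two proposed families --- divisors of $z$ and the magnitude conditions $\exists y\,(0<ny<z)$ --- therefore cannot determine $\equiv_\forall$ for $(F,z)$. Concretely, compare $G'=\ZZ/3\ZZ\overrightarrow{\times}\ZZ$ (so $uw(G')=\ZZ\overrightarrow{\times}\ZZ$, $z_{G'}=(3,0)$) with a dense subgroup $G''\subset\KK$ having $T(G'')=\ZZ/3\ZZ$ (so $uw(G'')\subset\RR$, $z_{G''}=1$). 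Both structures satisfy all your magnitude formulas and have $3\mid z$, yet $\exists y\,(4y<z\leq 5y)$ holds in $uw(G'')$ and fails in $uw(G')$; correspondingly $G'\not\equiv_\forall G''$ by the very theorem you are proving. So the assertion that $\equiv_\forall$ of abelian t.o.g.\ with a named cofinal constant is controlled by those two families is simply false, and neither the back-and-forth sketch nor a QE argument can rescue it as stated.

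The paper proceeds quite differently for sufficiency: it passes to an $\omega_1$-saturated $G'$, analyses its linear part (Proposition~\ref{prop2}), proves a structure theorem for finitely generated Abelian c.o.g.\ (Proposition~\ref{prop3}), and then builds an explicit embedding of any finitely generated subgroup of $G$ into $G'$ (Proposition~\ref{prop4}, Corollary~\ref{cor5}). Your unwinding strategy could in principle be made to work, but you would need to identify the \emph{correct} family of existential invariants of $(F,z)$ --- at minimum including the $\psi_n$-type conditions $\exists y\,(ny<z\leq(n+1)y)$ --- and then actually prove completeness for that family, which is essentially as much work as the paper's direct argument.
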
 
\indent For $x$ in  the linear part $G_0$ of $G$, we will denote by $H_2(x)$ the convex subgroup of $G_0$ generated by $x$, and by $H_1(x)$ the greatest convex subgroup of $G_0$ which doesn't contain $x$. We know that the quotient group $H_2(x)/H_1(x)$ is Archimedean, it will be called the {\it Archimedean component of} $G_0$ {\it associated to} $x$. \\[2mm] 
\indent 
One easily checks that the cardinal of  $K(G)$ is $n$ if and only if 
$G$ satisfies the formulas \\ 
$\exists x,\; R(0,x,\dots, nx)\wedge 
\neg R(0,x, \dots,nx,(n+1)x)$, and, for every $m>n$, \\ 
$\forall x,\; 
\neg R(0,x,\dots,mx) \vee R(0,x,\dots,mx,(m+1)x)$, and that the torsion subgroup of $G$ is 
determined by the formulas $\exists x,\; x\neq 0 \wedge nx=0$ and $\forall x, \;x\neq 0 
\Rightarrow nx \neq 0$. It follows that if two Abelian c.o.g.  $G$ and $G'$ satisfy the same universal formulas then their torsion subgroups are isomorphic, and $K(G)$ and $K(G')$ are either finite and isomorphic or both infinite. For proving the converse, we can assume that $G'$ is $\omega_1$-saturated, and by a property of universal formulas, it is sufficient to show that for every finite subset $E$ of $G$ there exists a finite subset $E'$ of $G'$ and a one-to-one mapping $x\mapsto x'$ from $E$ onto $E'$ such that for every  $x,y,z$ in $E$, we have: $R(x,y,z) \Leftrightarrow R(x',y',z')$ and 
$x=y+z\Leftrightarrow x'=y'+z'$. So Theorem \ref{thm63} is a consequence of 
Proposition \ref{prop2} a) and Corollary \ref{cor5} above.  
\begin{prop}\label{prop2} Let $G$ be an $\omega_1$-saturated infinite Abelian c.o.g.. \\ 
a) If $K(G)$ is infinite, then $K(G)\simeq \KK$. \\ 
b) The linear part $G_0$ of $G$ has no maximal proper convex subgroup. \\ 
c) For every $x$ in the positive cone of $G_0$, there exists 
$y\in G_0$ such that $R(0,x,y)$ and the Archimedean component of $G_0$ associated to $y$ is isomorphic to $\RR$. 
\end{prop}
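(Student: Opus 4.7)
All three parts follow from $\omega_1$-saturation of $G$ applied to suitably chosen countable types.

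\textbf{Part (a).} Since $K(G)$ is an infinite subgroup of the Archimedean c.o.g.\ $\KK$, it is dense in $\KK$. Fix $\alpha\in\KK$ and choose pairwise distinct sequences $(\alpha_n^\pm)_n\subseteq K(G)\setminus\{\alpha\}$ such that the open arcs $I_n\subset\KK$ bounded by $\alpha_n^\pm$ and containing $\alpha$ form a decreasing chain with $\bigcap_n I_n=\{\alpha\}$. Lift each $\alpha_n^\pm$ to $y_n^\pm\in G$ via Swirczkowski's embedding $f\colon G\hookrightarrow\KK\overrightarrow{\times}L$, and consider the countable type $p(x)=\{R(y_n^-,x,y_n^+):n\in\NN\}$. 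Finite consistency follows from density of $K(G)$: any finite intersection of the $I_n$ meets $K(G)$, and any preimage in $G$ witnesses the finite subtype. By $\omega_1$-saturation, $p$ is realized by some $x\in G$; a case analysis of the lexicographic cyclic order on $\KK\overrightarrow{\times}L$ (using that the $\alpha_n^\pm$ are pairwise distinct) forces $\pi_1(x)\in I_n$ for cofinitely many $n$, hence $\pi_1(x)=\alpha$. Thus $\alpha\in K(G)$ and $K(G)=\KK$.

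\textbf{Part (b).} Suppose for contradiction that $C\subsetneq G_0$ is a maximal proper convex subgroup, and pick $g>0$ in $G_0\setminus C$; by maximality $H_2(g)=G_0$. Realize the countable type
\[
p(x)=\{R(0,kg,x):k\in\NN^{*}\}\cup\{R(0,x,2x,\dots,nx):n\in\NN^{*}\},
\]
whose first block forces $x>kg$ in the induced linear order of $G_0$ for every $k$ and whose second block forces $x$ into the positive cone of $G_0$ (an element of $G$ lies in the positive cone of the linear part precisely when all its successive integer multiples appear in strictly increasing cyclic order from $0$). Finite consistency is witnessed by $x=(N{+}1)g$ with $N$ the largest $k$ occurring. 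The realization lies in $G_0$ yet exceeds every $kg$, contradicting $G_0=H_2(g)$.

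\textbf{Part (c), the main step.} The plan is to first produce a divisible $y>x$ in $G_0$ and then realize every real number as a Dedekind cut of the rational multiples of $y$. Given $x>0$ in $G_0$, realize the countable type
\[
\{R(0,x,b)\}\cup\{R(0,b,2b,\dots,nb):n\in\NN^{*}\}\cup\{\exists c_n,\,n!\,c_n=b:n\in\NN^{*}\},
\]
with finite consistency witnessed by $b=N!\cdot(2x)$ for $N$ large. The realization $y$ lies in the positive cone of $G_0$, exceeds $x$ and is divisible, so $\QQ y\subseteq G_0\cap H_2(y)$ and $\QQ\bar y\leq A(y)=H_2(y)/H_1(y)\leq\RR$. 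For each $r\in\RR$ consider the countable type
\[
q_r(z)=\{R(0,qy,z)\wedge R(0,z,q'y):q,q'\in\QQ,\ q<r<q'\}\cup\{\exists k\in\NN^{*},\,k|z-qy|\geq y:q\in\QQ\},
\]
whose second block (a genuine first-order conjunction thanks to the $\exists k$) enforces $\bar z\neq q\bar y$ in $A(y)$ for every $q\in\QQ$. Finite consistency is witnessed by $z=q''y$ for a rational $q''$ in the nonempty open interval carved out by the finitely many $q,q'$ appearing and distinct from the finitely many $q$'s in the second block. By $\omega_1$-saturation some $b_r\in G$ realizes $q_r$; since the $qy$'s all sit in $G_0$, the case analysis of the lex cyclic order forces $\pi_1(b_r)=1$, i.e., $b_r\in G_0$. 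The unique real lying in the cut determined by $q_r$ is $r$, so $\bar b_r=r$ in $A(y)$; as $r\in\RR$ was arbitrary, $\RR\subseteq A(y)\leq\RR$ and $A(y)\cong\RR$. The most delicate point is the preliminary saturation step producing a divisible $y$ --- without it, the rational witnesses $q''y$ needed for finite consistency of $q_r$ would not be available in $G_0$.
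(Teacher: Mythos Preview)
Parts (a) and (b) are correct. Your argument for (a) differs from the paper's: you squeeze $\pi_1(x)$ toward $\alpha$ using a nested sequence of arcs with endpoints lifted from $K(G)$, while the paper writes explicit parameter-free formulas built from $R(0,y,2y,\dots,(n-1)y)\wedge\neg R(0,y,\dots,ny)$ that confine $\theta(x)$ to intervals of width $2\pi/n$. Both rely on density of an infinite subgroup of $\KK$ and are equally valid. Part (b) is essentially the paper's argument (the paper additionally proves $G_0\neq\{0\}$, which you do not need for the bare statement).

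Part (c) has a genuine gap. Your Step~1 type only contains $\exists c_n\,(n!\,c_n=b)$, so the witnesses $c_n$ lie in $G$, not necessarily in $G_0$. Proper c-convex subgroups are \emph{not} pure in general --- the paper's own example $H=\langle(\bar1,1)\rangle\subset(\ZZ/3\ZZ)\overrightarrow{\times}\ZZ$ has $(0,3)\in H_0$ divisible by $3$ in $H$ but not in $H_0$. Hence the conclusion ``$\QQ y\subseteq G_0\cap H_2(y)$'' is unjustified, and Step~2 then uses parameters $qy$ that need not lie in $G_0$ (and are only well-defined up to torsion). A separate problem: the ``$\exists k\in\NN^{*}$'' in the second block of $q_r$ is an infinite disjunction, not a first-order formula, so your parenthetical remark is incorrect; fortunately that block is unnecessary for irrational $r$, which is all you need.

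The paper sidesteps divisibility entirely. It realizes a \emph{two-variable} type producing $y,z\in G_0$ above $x$ with $R(a_ny,\,b_nz,\,(a_n+1)y)$ for a sequence $a_n/b_n\nearrow\xi$ irrational; this forces $\bar z/\bar y=\xi$ in $A(y)$, so $A(y)$ is dense in $\RR$, and a second saturation over integer multiples of $y$ then hits every real. Only integer multiples ever appear, so purity of $G_0$ is irrelevant. Your route is salvageable: strengthen each divisibility clause to $\exists c_n\big(n!\,c_n=b\wedge R(0,c_n,2c_n,\dots,(n!)c_n)\big)$, which forces $\pi_1(c_n)=1$ (any nontrivial $n!$-th root of unity makes the chain wrap past $0$ before reaching $b\in G_0^+$), and drop the second block of $q_r$.
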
 
\begin{proof}.\\ 
\indent a) If $n\geq 2$ and $\overline{x}=e^{i\alpha}$, where $\alpha \in [0,2\pi[$, 
then the formula $R(0,x,2x,\dots,(n-1)x)\; \& \; \neg 
R(0,x,2x,\dots,(n-1)x,nx)$ implies $\frac{2\pi}{n}\leq \alpha \leq \frac{2\pi}{n-1}$. 
Let $m\in \NN^*$, if for every $y$ such that $\overline{y}=e^{i\beta}$, where 
$\frac{2\pi}{n}\leq \beta \leq \frac{2\pi}{n-1}$, 
 we have $R(0,my,x)$, then we have $2\pi\frac{m}{n} \leq \alpha$. Now, if there  
exists $y$ such that $\overline{y}=e^{i\beta}$, where 
$\frac{2\pi}{n}\leq \beta \leq \frac{2\pi}{n-1}$, and 
$R(0,x,(m+1)y)$, then: $\alpha \leq 2\pi\frac{m+1}{n}$. We fix some $\alpha \in [0,2\pi[$, 
and for every $n\geq 2$, denote by $m_n$ the integer such that 
$2\pi\frac{m_n}{n} \leq \alpha< 2\pi\frac{m_n+1}{n}$. If $x$ is a realization of the type: 
$$\{[\forall y,(R(0,y,2y,\dots,(n-1)y)\& \neg R(0,y,2y,\dots,(n-1)y,ny)) \Rightarrow 
R(0,m_ny,x)]\&$$ 
$$[\exists y,R(0,y,2y,\dots,(n-1)y)\&\neg R(0,y,2y,\dots,(n-1)y,ny) 
\&R(0,x,(m_n+1)y)];\; n\geq 2\}.$$ 
then $\overline{x}=e^{i\alpha}$. Since $K(G)$ is infinite, it is dense in $\KK$, 
and every finite subset of this countable type has a realization. 
By $\omega_1$-saturation the type has a realization in $G$, 
which proves that $K(G)=\KK$. \\ 
\indent\indent b) First we show that $G_0$  is nontrivial. This is obvious if
$K(G)$ is finite, since $G$ is infinite. Otherwise,  $K(G)$ is dense in $\RR$, hence for every  
$n\geq 2$ there exists $x\in G$ such that $R(0,x,nx)$. 
So, every finite subset of the type $\{ R(0,x,nx);\; 
n\geq 2\}$, which 
cha\-rac\-t\-e\-ri\-zes the elements that belong to the positive cone of $G_0$, 
has a realization in $G$. By $\omega_1$-saturation the whole type has a realization 
in $G$, which proves that $G_0$ is nontrivial. 
Let $x$ belong to the positive cone of $G_0$ and consider the type 
$\{R(0,x,nx,y,ny);\; n\geq 2\}$, 
which says, on the one hand that $y$ is an element of $G_0$, and on the other hand that $y$ is greater than $\NN x$ within $G_0$. A finite subset of this type is a consequence of some formula $R(0,x,nx,y,ny)$, and it has a realization $(n+1)x$, hence by
$\omega_1$-saturation the whole type has a realization in $G$, and $x$ is not cofinal 
in $G_0$. It follows that $G_0$ has no greatest proper convex subgroup. \\ 
\indent c) Let $x$ belong to the positive cone of $G_0$, $2< \xi$ be an  irrational element of 
$\RR_+$, and $(a_n/b_n)$ be a strictly increasing sequence of rational numbers such that for every 
$n\in \NN$ $a_n/b_n < \xi < (a_n+1)/b_n$. Consider 
the type $\{R(0,x,y,ny), \;R(0,x,z,nz), \;R(a_ny,b_nz,(a_n+1)y);\; n\geq 2\}$, 
which says that there exist $y$ and $z$ in $G_0$ which are greater than 
$x$ and such that the class of $z$ in $H_2(y)/H_1(y)$ is the irrational number $\xi$ 
(where the class of $y$ is assumed to be $1$); hence, $H_2(y)/H_1(y)$ is dense in $\RR$.  
A finite subset of this type is generated by some formula 
$R(0,x,y,n_0y)\&R(0,x,z,n_0z)\&R(a_{n_1}y,b_{n_1} z,(a_{n_1}+1)y)\&
R(a_{n_2}y,b_{n_2} z,(a_{n_2}+1)y)$, and it has a realization: $y=x$ and $z=
(3a_{n_1}b_{n_2}+2)x$. By $\omega_1$-saturation, the type has a  
realization.  Now, let $\xi'$ in $\RR$ and let $(a_n'/b_n')$ be a strictly increasing sequence of rational numbers such that, for every $n\in \NN$, $(a_n'/b_n') \leq 
\xi'<((a_n'+1)/b_n')$. Consider the type $\{R(0,a_n'y,b_n' t,(a_n'+1)y), \;R(0,t,nt)
;\; n\geq 2\}$, which says that the class of $t$ is $\xi'$. A finite subset of the type that we defined is characterized by some formula 
$R(0,a_n'y,b_n' t,(a_n'+1)y)\& R(0,t,nt)$ which has a realization since $H_2(y)/H_1(y)$ 
is dense in $\RR$. By $\omega_1$-saturation, $\xi'$ has an  
image in $\RR$ under the canonical embedding which associate $1$ to $y$. This proves that this embedding is an isomorphism. 
\end{proof}
\begin{prop}\label{prop3} 
Let $G$ be a finitely generated Abelian c.o.g., $T(G)$ be its torsion subgroup 
and $T(K(G))$ be the torsion subgroup of 
$K(G)$. Then $G$ decomposes as a disjoint union 
$G=C \cup (x_0 +C)\cup \cdots \cup ((n-1)x_0+C)$, where $n\in \NN^*$, \\  
$C=T(G)\oplus \ZZ x_1\oplus \cdots \oplus \ZZ x_m \oplus G_0$, where $x_1,\dots, 
x_m$ are torsion-free elements of $G$ whose classes $\overline{x_1}, \cdots,\overline{x_m}$ 
modulo $G_0$ are rationally independent within $K(G)$,\\
the class $\overline{x_0}$ of $x_0$ modulo $G_0$ generates $T(K(G))$, if $n>1$ then 
there exists a generator 
$u$ of $T(G)$ such that $nx_0-u$ is a positive and cofinal element of $G_0$, and if 
$p$ divides $nx_0-u$ within $G$, then $p$ and $n$ are coprime.\\ 
Furthemore, the t.o.g. $G_0$ is equal to a lexicographically ordered direct sum of 
finitely generated Ar\-chi\-me\-de\-an subgroups. 
\end{prop}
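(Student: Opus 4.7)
The overall plan is to build the decomposition from the exact sequence $0 \to G_0 \to G \to K(G) \to 0$ using the structure theory of finitely generated abelian groups. Since $G$ is finitely generated and abelian, so are $G_0$ and $K(G)$. The subgroup $G_0$ is linear, hence torsion-free, so $G_0 \cong \ZZ^k$ as an abstract group. The quotient $K(G)$ embeds in $\KK$ by Theorem \ref{sw}, so its torsion $T(K(G))$ lies inside $\UU$, which is locally cyclic; being finitely generated it is finite cyclic of some order $N$, and $K(G) \cong T(K(G)) \oplus \ZZ^m$ as an abstract group. I would lift a generator of $T(K(G))$ to $x_0 \in G$ and generators of the free summand to $x_1, \dots, x_m \in G$; any lift of a torsion-free element is automatically torsion-free, so the $x_i$ for $i \ge 1$ satisfy the claimed rational independence.

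Internal directness of $C = T(G) \oplus \ZZ x_1 \oplus \cdots \oplus \ZZ x_m \oplus G_0$ is straightforward to check: $T(G) \cap G_0 = 0$ since $G_0$ is torsion-free, rational independence of the $\overline{x_i}$ forces $(\sum \ZZ x_i) \cap G_0 = 0$, and a torsion element in the free-plus-$G_0$ part must project trivially to the free summand of $K(G)$ and hence vanish. Passing to $K(G)$ identifies $G/C$ with $T(K(G))/T(G) \cong \ZZ/n$ for $n := N/|T(G)|$, generated by the class of $x_0$, which yields the coset partition. Because $n\overline{x_0}$ generates the unique order-$|T(G)|$ subgroup of $T(K(G))$, which is precisely the image of $T(G)$, one picks a generator $u$ of $T(G)$ with $\overline{u} = n\overline{x_0}$, so that $nx_0 - u \in G_0$. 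Positive cofinality is then arranged by invoking the ``Furthermore'' part below to produce a positive cofinal element $h \in G_0$ and replacing $x_0$ by $x_0 + Kh$ for $K$ a large positive integer: this shifts $nx_0 - u$ by $nKh$, which dominates the original summand in the top Archimedean class.

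The expected main obstacle is the divisibility clause, that whenever $p \mid (nx_0-u)$ in $G$ one has $\gcd(p,n)=1$. This is delicate because adjusting $x_0$ by an element of $G_0$ leaves $nx_0 - u$ unchanged modulo $pG$ whenever $p \mid n$, so one must also exploit the freedom to choose a different generator $u$ of $T(G)$ and to translate $x_0$ by elements of $T(G)$ and by the $x_i$. I would address this via a simultaneous analysis in the finitely many $\FF_p$-vector spaces $G/pG$ for $p \mid n$, showing that a joint choice of $x_0$ and $u$ avoiding all the forbidden cosets is possible. Finally, the ``Furthermore'' claim follows because $G_0$ is a finitely generated torsion-free t.o.g.\ of finite rank, so its chain of convex subgroups $0 = H_0 \subsetneq \cdots \subsetneq H_\ell = G_0$ is finite, each $H_i$ is pure in $H_{i+1}$ (convexity forces purity in a torsion-free t.o.g.), and by the structure theorem for finitely generated abelian groups every inclusion splits off a finitely generated Archimedean complement; iterating gives the lexicographic direct sum.
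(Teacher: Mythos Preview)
Your overall plan coincides with the paper's: both start from the exact sequence $0\to G_0\to G\to K(G)\to 0$, apply the structure theorem to the finitely generated group $K(G)=T(K(G))\oplus\ZZ\overline{x_1}\oplus\cdots\oplus\ZZ\overline{x_m}$, lift generators, check internal directness of $C$ exactly as you do, and translate $x_0$ by a positive cofinal element of $G_0$ to arrange $nx_0-u>0$ cofinal. For the ``Furthermore'' clause the paper simply cites Schmitt; your direct splitting argument via the finite chain of convex (hence pure) subgroups is a legitimate replacement.

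Where you diverge is the divisibility clause, which you single out as the main obstacle and propose to handle by a simultaneous $\FF_p$--analysis and a clever joint choice of $x_0,u$. In the paper this is a two--line torsion argument, not a search: if $d$ divides both $n$ and $nx_0-u$, write $n=dn'$ and $nx_0-u=dy'$; then $d(n'x_0-y')=u$, so $n'x_0-y'$ is torsion and hence lies in $T(G)$. Passing to $K(G)$ gives $n'\overline{x_0}=\overline{y'}$ modulo the image of $T(G)$; when $y'\in G_0$ this yields $n'\overline{x_0}\in\mathrm{image}(T(G))$, forcing $n\mid n'$ and thus $d=1$. No choice beyond the ones already made is required.

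Your instinct that something resists the ``within $G$'' reading is in fact correct: take $H=\langle(\overline 1,1)\rangle\subset(\ZZ/2\ZZ)\overrightarrow{\times}\ZZ$, so $T(H)=0$, $K(H)\cong\ZZ/2$, $n=2$, $u=0$, and $nx_0-u=2x_0$ is divisible by $2$ in $H$ for every admissible $x_0$. The paper's argument above tacitly uses $y'\in G_0$, and the only place the clause is invoked (Proposition~\ref{prop4}) takes $p$ to be the largest integer dividing $nx_0-u$ \emph{within $G_0$}. So the resolution is not to engineer special $x_0,u$ via $G/pG$ (impossible in the example) but to read and prove the clause with $G_0$ in place of $G$, after which the short torsion argument suffices.
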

\begin{proof}. 
By \cite{Sc 82} Lemma 1.2 p. 2, $G_0$ is equal to a lexicographically ordered direct sum of 
finitely generated Archimedean subgroups. 
Since $G$ is finitely generated, $K(G)=G/G_0$ is finitely generated, and by 
\cite{La 65} Theorem 4.8 p. 49, there exist $x_1,\dots,x_m$ in $G$ such that 
$K(G)=T(K(G))\oplus \ZZ \overline{x_1}\oplus \cdots \oplus \ZZ\overline{x_m}$ and 
$\overline{x_1}, \cdots,\overline{x_m}$ are rationally independent within $K(G)$. 
Furthermore, $T(K(G))$ is a finitely generated subgroup of $\KK$, hence it is cyclic and finite, 
let $x_0\in G$ be such that $T(K(G))$ is generated by $\overline{x_0}$. 
In the same way, $T(G)$ is cyclic and finite, we let $u$ be a generator of $T(G)$. Denote by $l$ 
the cardinal of $T(G)$ and by  
$n$ the lowest element of $\NN^*$ such that $n\overline{x_0}=\overline{u}$. 
If $n=1$, we can assume that $x_0=u$. If $n>1$, if necessary, we can take $x_0+y$ in place 
of $x_0$, 
where $y$ is a positive and cofinal element of $G_0$, and we  assume that $nx_0-u$ is positive and cofinal within $G_0$. \\ 
\indent Set $y=nx_0-u$, 
and let $d$ be a divisor of $n$ such that $d$ divides $nx_0-u$ within $G$, say $y=dy'$, 
and $n=dn'$. Then $u=d(n'x_0-y')$, and since $T(G)$ is pure in $G$ we have: 
$u'=n'x_0-y'\in T(G)$. It follows that the cardinal of the quotient set 
$T(K(G))/T(G)$ is at most equal to $n'$. Now, we know that this cardinal is equal to $n$, 
hence $n'=n$ and $d=1$. \\ 
\indent Set $H=\ZZ x_1\oplus \cdots \oplus \ZZ x_m$; we have: $H\cap G_0=\{0\}$, and 
since $H \oplus G_0$ is torsion-free we also have: $T(G)\cap (H\oplus G_0)=\{0\}$, 
consequently, $G$ contains the subgroup $T(G)\oplus H \oplus G_0$. 
Let $x\in G$, $\overline{x}$ decomposes in an unique way as 
$\overline{x}=k\overline{x_0}+a_1\overline{x_1}+\cdots+a_m\overline{x_m}$, where the 
$a_i$'s belong to $\ZZ$ and $0\leq k < ln$ (the cardinal of $T(K(G))$). Let  
$k=nq+r$, where $q\geq 0$ and $0\leq r< n$, then 
$\overline{x}=r\overline{x_0}+q\overline{u}+a_1\overline{x_1}+\cdots+a_m\overline{x_m}$, 
and $x$ decomposes as $x=rx_0+qu+a_1x_1+\cdots + a_mx_m+y$, where  
$y \in G_0$, this concludes the proof. 
\end{proof} 
\begin{prop}\label{prop4} Let $G$ be an Abelian c.o.g. and $G'$ be another Abelian c.o.g. 
such that 
$T(G)$ embeds in $T(G')$, $K(G)$ embeds $K(G')$ (in both cases as c.o.g.), 
and if $G_0$ is nontrivial then $G_0'$ has no greatest proper convex subgroup and for every 
$x'$ in the positive cone of $G_0'$, there exists 
$y'\in G_0'$ such that $x'<y'$ and the Archimedean component of $G_0'$ associated to $y$ 
is isomorphic to $\RR$. Then $G$ embeds in $G'$. 
\end{prop}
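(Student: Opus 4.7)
The strategy is to embed $G$ into $G'$ by exhausting $G$ with finitely generated subgroups and building the embedding piece by piece. I would first establish the following extension lemma: given finitely generated subgroups $H\leq H'$ of $G$ and a c.o.g.\ embedding $f:H\hookrightarrow G'$, the map $f$ extends to a c.o.g.\ embedding $f':H'\hookrightarrow G'$. A Zorn/transfinite-chain argument, starting from the embedding determined on the trivial subgroup, then produces the desired c.o.g.\ embedding $G\hookrightarrow G'$.

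The extension lemma rests on the normal form of Proposition \ref{prop3} applied to $H'$: one writes
\[ H'=\bigsqcup_{k=0}^{n-1}(kx_0+C), \qquad C=T(H')\oplus \ZZ x_1\oplus \cdots \oplus \ZZ x_m\oplus H'_0, \]
with $\overline{x_1},\dots,\overline{x_m}$ rationally independent in $K(H')$, $\overline{x_0}$ a generator of the cyclic group $T(K(H'))$ and, when $n>1$, a generator $u$ of $T(H')$ such that $nx_0-u$ is a positive cofinal element of $H'_0$ divisible in $H'$ only by integers coprime to $n$. Producing $f'$ then amounts to choosing images in $G'$ for $x_0,x_1,\dots,x_m,u$ and for the t.o.g.\ summand $H'_0$, compatibly with $f$, so that the resulting set map on $H'$ respects $+$ and the ternary relation $R$. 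The image of $T(H')$ is pinned down by $T(G)\hookrightarrow T(G')$, while the classes $\overline{x_0'},\overline{x_1'},\dots,\overline{x_m'}\in K(G')$ are provided by $K(G)\hookrightarrow K(G')$ and then lifted to representatives in $G'$, modifying by an element of $G'_0$ as needed to normalise $nx_0'-u'$.

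The main obstacle is the linear step: realising the finitely generated torsion-free t.o.g.\ $H'_0$ inside $G'_0$ so as to extend the already chosen embedding of $H_0$ and to send $nx_0-u$ to a prescribed positive cofinal element. By Proposition \ref{prop3}, $H'_0$ is a finite lexicographic sum of finitely generated Archimedean ordered subgroups, each of which embeds in $\RR$. The two hypotheses on $G'_0$ are tailored to exactly this task: the absence of a greatest proper convex subgroup of $G'_0$ lets me place each new Archimedean component of $H'_0$ above all convex subgroups of $G'_0$ already used for $f(H_0)$, while the existence cofinally in $G'_0$ of Archimedean components isomorphic to $\RR$ lets each finitely generated Archimedean piece of $H'_0$ be embedded into a fresh $\RR$-component of $G'_0$. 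The coprimality/divisibility condition on $nx_0-u$ is then automatic, because its image lies in a divisible $\RR$-component. Finally, checking that $f'$ respects $R$ reduces to the lexicographic definition of the cyclic order on $H'$: the behaviour on $K(H')$ is ensured by the embedding into $K(G')$, and within each coset $kx_0+C$ the cyclic order is governed by the linear order on $H'_0$, which is preserved by construction.
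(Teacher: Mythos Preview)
Your proof plan introduces a genuine difficulty that the paper's argument avoids. The problematic step is the extension lemma. When you pass from $H$ to a larger finitely generated $H'$, new Archimedean components of $H'_0$ can appear \emph{below} those of $H_0$: if $b\in H'_0$ satisfies $0<nb<a$ for every positive $a\in H_0$ and every $n$, then any extension $f'$ must send $b$ into an Archimedean class of $G'_0$ strictly below all of $f(H_0)$. The hypotheses on $G'_0$ only provide $\RR$-components \emph{above} a given element (``for every $x'$ in the positive cone there exists $y'>x'$ \dots''); nothing guarantees an $\RR$-component, or indeed any nontrivial component, below the ones already used. So the sentence ``the absence of a greatest proper convex subgroup of $G'_0$ lets me place each new Archimedean component of $H'_0$ above all convex subgroups of $G'_0$ already used for $f(H_0)$'' is exactly where the argument breaks. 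A second, related difficulty is compatibility: the Proposition~\ref{prop3} decomposition of $H'$ need not restrict to the chosen decomposition of $H$, so ``choosing images compatibly with $f$'' hides real work that you do not carry out.

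The paper sidesteps all of this by applying Proposition~\ref{prop3} once, directly to $G$ (implicitly taken finitely generated, which is how the proposition is used in Corollary~\ref{cor5}). One then chooses lifts $x_0',x_1',\dots,x_m',u'$ in $G'$ via the embeddings of $K(G)$ and $T(G)$, and if $G_0\neq\{0\}$ one picks, in a single pass, elements $0<y_1'<\cdots<y_s'$ of $G'_0$ with $H_2(y_j')/H_1(y_j')\simeq\RR$ and $y_s'$ large enough that $nx_0'-u'$ lands in the top component; each finitely generated Archimedean piece $A_j$ of $G_0$ is then embedded into the $\RR$-component at level $y_j'$. Because all the Archimedean levels are laid down simultaneously, the ``new component below an old one'' problem never arises, and no extension or transfinite machinery is needed. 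Your identification of the ingredients (Proposition~\ref{prop3}, the role of the $\RR$-components, the coprimality condition on $nx_0-u$) is correct; the fix is simply to drop the inductive framework and perform the construction in one shot.
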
 
\begin{proof}. 
Denote by $\varphi$ the embedding of $K(G)$ in $K(G')$, and let $x_0$, $x_1, 
\dots x_m$ and $u$ be defined in the same way as in Proposition \ref{prop3}. 
Let $x_1',\dots ,x_m'$ in $G'$ be such that $\varphi(\overline{x_1})=\overline{x_1'}, \dots, 
\varphi(\overline{x_m})=
\overline{x_m'}$; $x_1',\dots,x_m'$ are rationally independent because 
$\overline{x_1}, \dots,\overline{x_m}$ are rationally independent. 
Set $H'=\ZZ x_1'\oplus \cdots \oplus \ZZ x_m'$. 
Since $T(G')$ embeds in $T(K(G'))$, and $\KK$ contains one and only one subgroup of every cardinal, 
$T(G')$ contains an element $u'$ which has the same torsion as $u$ and such that 
$\overline{u'}=\varphi(\overline{u})$. Then $T(G)\oplus H=\langle u\rangle \oplus H$ 
is isomorphic to $\langle u' \rangle \oplus H'$. If $G_0=\{0\}$, then $G=T(G)\oplus H$ 
embeds in $G'$. Assume that $G_0\neq \{0\}$, and decompose it as a lexicographically ordered 
direct sum of finitely generated Archimedean subgroups 
$G_0=A_s\oplus A_{s-1} \oplus \cdots \oplus A_1$. 
Let $x'\in G'$ be such that $\overline{x'}=\varphi(\overline{x_0})$ and $nx'-u'>0$. 
According to the hypothesis, $G_0'$ contains elements $0<y_1'<\cdots<y_{s-1}'<y_s'$ 
such that $H_2(y_1')<\cdots <H_2(y_s')$, and for $1\leq j\leq s$, 
$H_2(y_j')/H_1(y_j')\simeq \RR$, and we can assume that $y_s'\geq nx'-u'$. 
If $H_2(nx'-u')<H_2(y'_s)$, we take $x'+y_s'$ in place of $x'$. Since 
$G_0$ is finitely generated, there exists a greatest integer $p$ which divides $y$ within $G_0$. 
According to Proposition \ref{prop3}, $p$ and $n$ are coprime. If $p$ divides $y'$ 
within $G_0'$, we set $x_0'=x'$. Otherwise, 
we let $\alpha$ and $\beta$ in $\ZZ$ be such that $\alpha p+\beta n=1$, and 
we set $x_0'=(1-\beta n)x'+\beta u'$. We let $\frac{1}{p}(nx_0'-u')$ be the image of 
$\frac{1}{p}(nx_0-u)$. 
If $G_0\not\simeq \ZZ$, there exists an unique embedding 
$f$ from $A_s$ into $\RR$ such that $f(\frac{1}{p}(nx_0-u))=1$, and  
an unique isomorphism $g$ from $H_2(nx_0'-u')/H_1(nx_0'-u')$ onto $\RR$ such that 
$g(\frac{1}{p}\overline{x_0}')=1$. 
We set $B_s'=g^{-1}\circ f(A_s)$, since $B_s'$ is finitely generated, there exists a 
subgroup $A_s'$ of $G_0'$ which contains $\frac{1}{p}(nx_0'-u')$ and such that 
$B_s'=A_s'/H_1(nx_0'-u')$. 
We define subgroups $A_{s-1}',\dots, A_1'$ in such a way that we get an 
ordered groups isomorphism between $G_0=A_s\oplus \cdots \oplus A_1$ 
and $A_s'\oplus \cdots \oplus A_1'$, which extends to an isomorphism of c.o.g. 
from $\langle u\rangle \oplus H \oplus G_0$ onto 
$\langle u'\rangle \oplus H' \oplus A_s'\oplus \cdots \oplus A_1'$. Finally, since the 
image of $nx_0-u$ is $nx_0'-u'$, this isomorphsim extends to an isomorphism 
from $G$ onto the subgroup of $G'$ which is generated by $x_0'$ and 
$\langle u'\rangle \oplus H' \oplus A_s'\oplus \cdots \oplus A_1'$. 
\end{proof}
\begin{cor}\label{cor5} 
Let $G$ and $G'$ be two Abelian c.o.g. having isomorphic torsion subgroups, 
where $G'$ is infinite and $\omega_1$-saturated, and such that either $K(G')$ is infinite, 
or $K(G')$ is isomorphic to $K(G)$. Then every finitely generated subgroup of $G$ 
embeds into $G'$. 
\end{cor}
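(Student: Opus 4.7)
My plan is to apply Proposition \ref{prop4} with its $G$ replaced by an arbitrary finitely generated subgroup $H$ of our $G$, and to verify the three hypotheses of that proposition from (a) the given isomorphism of torsion subgroups, (b) the dichotomy on $K(G')$, and (c) the structural consequences of $\omega_1$-saturation collected in Proposition \ref{prop2}.

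For the torsion condition, since $T(H)\subseteq T(G)$ and $T(G)\simeq T(G')$, $T(H)$ clearly embeds in $T(G')$. For the winding-part condition I would split into the two cases of the hypothesis. When $K(G')\simeq K(G)$, I use the naturality of the first Swirczkowski projection: the inclusion $H\hookrightarrow G$ gives $K(H)=\pi_{1,G}(H)\subseteq \pi_{1,G}(G)=K(G)$, so $K(H)$ embeds in $K(G')$. When $K(G')$ is infinite, Proposition \ref{prop2}(a) gives $K(G')\simeq \KK$, and by Swirczkowski's theorem $K(H)$ is canonically a c.o.subgroup of $\KK$, hence embeds in $K(G')$.

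For the linear-part condition of Proposition \ref{prop4}, assume $H_0\neq\{0\}$ (otherwise the condition is vacuous). Since $G'$ is infinite and $\omega_1$-saturated, parts (b) and (c) of Proposition \ref{prop2} apply to $G'$ and give precisely what is required: $G_0'$ has no greatest proper convex subgroup, and above every positive element of $G_0'$ lies an element whose Archimedean component is isomorphic to $\RR$. Having verified all three hypotheses, Proposition \ref{prop4} supplies an embedding $H\hookrightarrow G'$, which is the desired conclusion.

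The only point requiring any care is the bookkeeping between the two cases of the hypothesis on $K(G')$, and in particular the verification that in the case $K(G')\simeq K(G)$ the embedding of winding parts is really induced by the inclusion $H\subseteq G$ (which is why the independence of $\pi_1$ from the chosen Swirczkowski representation, recorded after Theorem \ref{sw}, matters here). Everything else is a direct assembly of the two preceding results.
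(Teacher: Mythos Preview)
Your proof is correct and is exactly the derivation the paper intends: the corollary is stated without proof precisely because it is meant to be the immediate combination of Proposition~\ref{prop4} (applied to a finitely generated subgroup $H$ of $G$) with the structural facts about an $\omega_1$-saturated $G'$ supplied by Proposition~\ref{prop2}. Your handling of the two cases on $K(G')$ and your invocation of the representation-independence of $\pi_1$ are the right bookkeeping details.
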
 
\begin{cor}\label{cor6} 
There are $2^{\aleph_0}$ distinct universal theories of Abelian c.o.g., each one determined 
by a couple of invariants which are subgroups of $\UU$: $T(G)$ and $K(G)$ if $K(G)$ is finite, 
$T(G)$ and $\UU$ if $K(G)$ is infinite. 
\end{cor}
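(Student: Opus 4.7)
The plan is to combine the classification given by Theorem \ref{thm63} with a cardinality count of the admissible invariants and an explicit realization for each one. By Theorem \ref{thm63}, the universal theory of an Abelian c.o.g.\ $G$ is determined by the isomorphism type of $T(G)$ together with the isomorphism type of $K(G)$ when the latter is finite, and by $T(G)$ alone (plus the information that $K(G)$ is infinite) otherwise. Both pieces of data are subgroups of $\UU$: indeed $T(G)\subseteq T(K(G))\subseteq T(\KK)=\UU$, and when $K(G)$ is finite it consists entirely of torsion elements of $\KK$, hence $K(G)\subseteq\UU$ as well.

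Next I would count the isomorphism types of subgroups of $\UU$. Since $\UU$ is locally cyclic, every subgroup $H\leq\UU$ decomposes as the direct sum of its $p$-primary components, and each such component is either cyclic of order $p^n$ for some $n\in\NN$, or the Pr\"ufer group $\ZZ(p^\infty)$. Hence the isomorphism type of $H$ is encoded by a function $\pi_H:\mathrm{Primes}\to\NN\cup\{\infty\}$, and distinct functions yield non-isomorphic subgroups. There are $2^{\aleph_0}$ such functions, hence $2^{\aleph_0}$ pairwise non-isomorphic subgroups of $\UU$.

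Third, I would exhibit, for every admissible invariant, an Abelian c.o.g.\ realizing it. For a pair $(T,K)$ with $K=\langle e^{2i\pi/n}\rangle$ finite and $T\leq K$, take $G=\langle T\times\{0\},\,(e^{2i\pi/n},1)\rangle\subseteq\KK\overrightarrow{\times}\ZZ$; a direct inspection of the generic element $(t+ke^{2i\pi/n},k)$ (with $t\in T$, $k\in\ZZ$) shows that the torsion part is $T$ and that $\pi_1(G)=T+\langle e^{2i\pi/n}\rangle=K$. For $T\leq\UU$ arbitrary and $K(G)$ required to be infinite, fix an irrational $\alpha$ and take $G=\langle T\times\{0\},\,(e^{2i\pi\alpha},1)\rangle\subseteq\KK\overrightarrow{\times}\RR$; the same inspection gives $T(G)=T$ and $K(G)=T+\langle e^{2i\pi\alpha}\rangle$, which is infinite because $\alpha$ is irrational.

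Putting the pieces together, there are countably many admissible invariants in the ``$K(G)$ finite'' case (pairs $T\leq K$ of finite cyclic subgroups of $\UU$) and $2^{\aleph_0}$ in the ``$K(G)$ infinite'' case, while the countability of $L_c$ yields the trivial upper bound $2^{\aleph_0}$. The whole argument is essentially a counting exercise once Theorem \ref{thm63} is on hand; the only (entirely routine) obstacle is verifying that the explicit groups above have exactly the prescribed $T(G)$ and $K(G)$.
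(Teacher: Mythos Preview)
Your argument is correct and is essentially what the paper intends: the corollary is stated in the paper without a separate proof, as an immediate consequence of Theorem \ref{thm63}. You have made explicit the two ingredients the authors leave to the reader---the cardinality count of subgroups of $\UU$ via their primary decomposition, and the realization of each admissible pair of invariants by a concrete subgroup of $\KK\overrightarrow{\times}\ZZ$---and both are carried out correctly. One cosmetic remark: you mix additive and multiplicative notation for $\KK$ (writing $t+ke^{2i\pi/n}$ and $T+\langle e^{2i\pi/n}\rangle$); this causes no mathematical trouble, but in the paper $\KK$ is written multiplicatively.
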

\maketitle
\begin{tabbing}
giraudet@math\= a.dide \= Michele GIRAUDET.un espace\= abcde\= 
gerard.leloup@univ-le \kill
\>\>{\small{\rm Mich\a`ele GIRAUDET}} \>\> {\small{\rm G\a'erard LELOUP}}\\
\>{\small{\rm giraudet@math.univ-paris-diderot.fr}} \> 
\>{\small{\rm gerard.leloup@univ-lemans.fr}}
\end{tabbing}
\begin{center}{\small{\rm 
D\'epartement de Math\'ematiques\\
Facult\'e des Sciences\\
avenue Olivier Messiaen\\
72085 LE MANS CEDEX\\
FRANCE\\[2mm] 
Fran\c{c}ois LUCAS\\
LAREMA - UMR CNRS 6093\\
Universit\'e d'Angers\\Ê
2 boulevard Lavoisier\\
49045 ANGERS CEDEX 01\\ 
FRANCE\\
lucasfm49@gmail.com}}
\end{center}
\end{document}